\newtheorem{thm}{Theorem}
\newtheorem{prop}[thm]{Proposition}
\newtheorem{defn}{Definition}
\newtheorem{rem}{Remark}
\numberwithin{equation}{section}
\definecolor{Green}{rgb}{0.2,0.7,0.3}
\title {A decoupled unconditionally stable numerical scheme for the Cahn-Hilliard-Hele-Shaw system}
\author{
 {\sc Daozhi Han}\footnote{Department of Mathematics, Florida State University, Tallahassee, FL 32306, USA.
Email: \emph{dhan@math.fsu.edu}} 
 }
\date{}
\begin{document}
\maketitle

\begin{abstract}
We propose a novel decoupled unconditionally stable numerical scheme for the simulation of two-phase flow in a Hele-Shaw cell which is governed by the Cahn-Hilliard-Hele-Shaw system (CHHS) with variable viscosity. The temporal discretization of the Cahn-Hilliard equation is based on a convex-splitting of the associated energy functional. Moreover, the capillary forcing term in the Darcy equation is separated from the pressure gradient at the time discrete level by using an operator-splitting strategy. Thus the computation of the nonlinear Cahn-Hilliard equation is completely decoupled from the update of pressure.  Finally, a pressure-stabilization technique is used in the update of pressure so that at each time step one only needs to solve a Poisson equation with constant coefficient.  We show that the scheme is unconditionally stable.  Numerical results are presented to demonstrate  the accuracy and efficiency of our scheme.

\end{abstract}

\begin{keywords}
Cahn-Hilliard-Hele-Shaw;  decoupling; unconditional stability; convex-splitting; operator-splitting

\end{keywords}

\section{Introduction}
Consider the Ginzburg-Landau free energy of a binary fluid with matched density (assumed to be $1$)
\begin{align}\label{GLenergy}
E(\phi)=\gamma\int_{\Omega}\frac{1}{\epsilon}F(\phi)+\frac{\epsilon}{2}|\nabla\phi|^2\,dx,
\end{align}
where $\gamma$ is a dimensionless surface tension parameter, $F(\phi)=\frac{1}{4}(\phi^2-1)^2$ is the homogeneous free energy density function,  $\phi$ is the order parameter which takes distinct values $\pm1$ in the respective bulk phase, and $\epsilon$ is a constant measuring the thickness of the transition layer between the two phases. Then the two-phase incompressible flow in a Hele-Shaw cell can be modeled by the following Cahn-Hilliard-Hele-Shaw system \cite{LLG2002a,LLG2002b, Wise2010, FeWi2012, WaZh2013}
\begin{equation}\label{CHD}
\left\{
  \begin{aligned}
  &\partial_t \phi +\nabla \cdot (\phi \mathbf{u})=\frac{1}{Pe}\nabla \cdot \big(m(\phi) \nabla \mu\big),\\
  & \mu=\phi^3-\phi-\epsilon^2\Delta \phi, \\
  &\mathbf{u}=-\frac{1}{12\eta(\phi)}\big(\nabla p+\frac{\gamma}{\epsilon}\phi \nabla \mu \big),\\
  &\nabla \cdot \mathbf{u}=0.
  \end{aligned}
  \right.
\end{equation}
Here $Pe$ is the diffusional Peclet number; $m(\phi)$ and $\eta(\phi)$ are the mobility and kinematic viscosity coefficient, respectively. Throughout, the following assumptions will be assumed 
\begin{align}\label{AssBound}
0< m_1 \leq m(\phi) \leq m_2, \quad 0< \eta_1 \leq \eta(\phi) \leq \eta_2.
\end{align}
One may recognize that the first two equations in the system \eqref{CHD} are the (convective) Cahn-Hilliard equation with $\mu$ the chemical potential, and that the last two equations are the Darcy equation incorporating the elastic forcing term.

We close the system with the following initial and boundary conditions
\begin{align}
&\phi|_{t=0}=\phi_0, \\
&\partial_{\mathbf{n}}\phi|_{\partial \Omega}=0, \label{noflux1}\\
&\partial_{\mathbf{n}}\mu|_{\partial \Omega}=0, \label{contactangel} \\
&\mathbf{u}\cdot \mathbf{n}|_{\partial \Omega}=0.\label{noflux2}
\end{align}
Here $\mathbf{n}$ is the unit outer normal of the boundary $\partial \Omega$;  Eq. \eqref{noflux1} is a Neumann boundary condition for phase field variable which says that the diffuse interface is perpendicular to the physical boundary \cite{Abels2009}; Eq. \eqref{contactangel} means that there is no mass flux through the boundary; Eq. \eqref{noflux2} is the usual no penetration boundary condition for fluid velocity. With boundary conditions \eqref{noflux1}-\eqref{noflux2}, it is clear that the CHHS system \eqref{CHD} is  energy dissipative
\begin{align}\label{continuousenergylaw}
&\frac{dE}{dt} =-\frac{\gamma}{Pe}\int_{\Omega}m(\phi)|\nabla\mu|^2\,dx-\int_{\Omega}12\eta(\phi)|\mathbf{u}|^2\, dx \leq 0.
\end{align}

The CHHS system \eqref{CHD} can be viewed as a simplification of the Cahn-Hilliard-Navier-Stokes system (CHNS) with (nearly) matched density in the Hele-Shaw setting \cite{LLG2002a}. See \cite{LoTr1998, LiSh2003, KKL2004, Feng2006, KaWe2007, ShYa2010c, BoMi2011, HaWa2014} and many others for results related to CHNS system. The applications of the CHHS model and its variant are abundant. In \cite{ShOo1992}, a similar set of equations are employed in the simulation of spinodal decomposition of a binary incompressible fluid in a Hele-Shaw cell. Recently, the CHHS system has been applied in the study of Saffman-Taylor instability \cite{CHM2014} when a more viscous fluid is displaced by a less viscous one resulting in complex pattern formation. Incorporated in a mass source term, the CHHS system also serves as a tumor growth model, cf. \cite{WLC2011}. When accounted for permeability/hydraulic conductivity, the CHHS system, also known as Cahn-Hilliard-Darcy equation (CHD) in the literature, can be used to model multiphase flow in porous media. We refer to \cite{HSW13} and references therein for many potential applications of the CHD system.

We note that the equations in \eqref{CHD} are coupled, highly nonlinear and numerically stiff with large spatial derivative over a small transition layer. Thus solving the CHHS system numerically is challenging. On one hand, unconditionally stable schemes are preferred in order to cope with the stiffness issue. A common strategy in discretizing the nonlinear chemical potential equation (second equation in \eqref{CHD}) in time is based on the convex-splitting of the free energy functional $E$, i.e., treating the convex part of the functional implicitly and concave part explicitly, an idea dates back to Eyre \cite{Eyre1998}. This semi-implicit discretization yields not only unconditional stability but also unconditionally unique solvability \cite{Wise2010, FeWi2012}. On the other hand,  unconditionally stable schemes such as schemes based on convex-splitting tend to be nonlinear and coupled. Recently, efficient nonlinear multigrid solvers have been designed to solve the resulting nonlinear system \cite{Wise2010, FeWi2012, GXX2014}. 

In this work, we propose a novel decoupled unconditionally stable numerical scheme for the CHHS system with variable viscosity. The temporal discretization of the Cahn-Hilliard equation is based on the aforementioned convex-splitting of the energy functional. Moreover, an operator-splitting/fractional-step method is applied to split the computation of pressure gradient and capillary forcing term in the Darcy equation. As a result,  the computation of the nonlinear Cahn-Hilliard equation is completely decoupled from the update of pressure. A similar strategy has been utilized in the computation of a triphasic
              Cahn-Hilliard-Navier-Stokes model with variable density in \cite{Minjeaud2013}.  Finally, a pressure-stabilization technique is used in the update of pressure so that  only a Poisson equation with constant coefficient needs to be solved at each time step.  The scheme is shown to be unconditionally stable.  

The rest of the article is organized as follows. In Section 2, we motivate and introduce the semi-discrete in time numerical scheme. The scheme is further discretized in space by finite element method in Section 3. We show that the fully discrete scheme is unconditionally stable. In section 4, we provide numerical evidence that the scheme is convergent and efficient. We remark that though we solve the nonlinear Cahn-Hilliard equation by Newton's method in our simulation, in principle the scheme can be combined with the nonlinear multigrid solver developed in \cite{FeWi2012}.

\section{A semi-discrete numerical scheme }
Let $N$ be a positive integer and $0=t_0 <t_1< \cdots <t_N=T$ be a uniform partition of $[0,T]$. Denote by $k:=t_n -t_{n-1}$, $n=1,2 \ldots N$,  the time step size.
We propose the following discrete in time, continuous in space numerical scheme for the computation of the system \eqref{CHD} under the boundary conditions \eqref{noflux1}-\eqref{noflux2}: seek $\{\phi^{n+1}, \mu^{n+1}, p^{n+1}\}$ such that

\begin{align}
  &\frac{\phi^{n+1}-\phi^n}{k}+\nabla \cdot (\phi^n \mathbf{u}^{n+1})=\frac{1}{Pe}\nabla \cdot \big(m(\phi^n)\nabla \mu^{n+1}\big) , \label{CHO}\\
  &\mu^{n+1}=(\phi^{n+1})^3- \phi^n-\epsilon^2 \Delta \phi^{n+1}, \label{CHCh}\\
  &\Delta(p^{n+1}-p^n)=12\eta_1 \nabla \cdot \mathbf{u}^{n+1}, \label{Pre}\\
  &\partial_{\mathbf{n}} \phi^{n+1}\big|_{\partial \Omega}=\partial_{\mathbf{n}} \mu^{n+1}\big|_{\partial \Omega}=\partial_{\mathbf{n}} p^{n+1}\big|_{\partial \Omega}=0,  \label{Boun}
\end{align}
where the velocity is given by
\begin{equation}\label{Vel}
  \mathbf{u}^{n+1}=-\frac{1}{12\eta(\phi^n)}(\nabla p^n+\frac{\gamma}{\epsilon}\phi^n \nabla \mu^{n+1}),  
\end{equation}

Several remarks are in order. First, we note that the semi-implicit discretization of the nonlinear term $(\phi^{n+1})^3-\phi^n$ in Eq. \eqref{CHCh} is derived from a convex splitting of the function $F(\phi)$, an idea dates back to Eyre \cite{Eyre1998} (see also \cite{Wise2010}). Owing to the convexity, it is straightforward to verify that the following inequality holds
\begin{align}\label{F:ine}
  F(\phi^{n+1})-F(\phi^n) \leq [(\phi^{n+1})^3-\phi^n](\phi^{n+1}-\phi^n).
\end{align}
Next, note that the pressure in the velocity equation \eqref{Vel} is explicit, thus upon substitution of $\mathbf{u}^{n+1}$ into the Cahn-Hilliard equation \eqref{CHO} allowing for decoupling the computation of Cahn-Hilliard system \eqref{CHO}--\eqref{CHCh} from the pressure equation \eqref{Pre}.     This is in particular contrast to the previous work \cite{Wise2010, GXX2014} where one has to solve a coupled nonlinear system by multigrid method. Another advantage of the scheme  \eqref{Vel} and \eqref{Pre} is that at each time step one needs to solve a pressure Poisson equation with only constant coefficient for which many efficient solvers exist.

The time discretization of the Darcy equation can be motivated from the standpoint of operator-splitting (fractional step method). There are two contributing forces in the Darcy equation: the pressure gradient and capillary force. The velocity $\mathbf{u}^{n+1}$ in Eq. \eqref{Vel} can be viewed as an intermediate velocity that takes into account the capillary forcing term. The pressure gradient from previous time step is included in $\mathbf{u}^{n+1}$ for accuracy, a modification similar to the incremental pressure projection method for Navier-Stokes equation (cf. \cite{GMS2006}). Then the true velocity must correct the intermediate velocity and satisfy
\begin{align} \label{TVel}
\tilde{\mathbf{u}}^{n+1}-\mathbf{u}^{n+1}=-\frac{1}{12\eta(\phi^n)}(\nabla p^{n+1} - \nabla p^n), \quad \nabla \cdot \tilde{\mathbf{u}}^{n+1}=0.
\end{align}
It is clear that adding Eqs. \eqref{Vel} and \eqref{TVel} together will recover the Darcy equation in \eqref{CHD}. The true velocity $\tilde{\mathbf{u}}^{n+1}$ can be eliminated once one applies the divergence operator to the first equation in \eqref{TVel}
\begin{align}\label{DivApp}
\nabla \cdot [\frac{1}{12\eta(\phi^n)}(\nabla p^{n+1} - \nabla p^n)] =\nabla \cdot \mathbf{u}^{n+1}.
\end{align}
As is in the case of Navier-Stokes equation with variable density \cite{GuSa2009}, we note that Eqs. \eqref{Vel} and \eqref{DivApp} can be interpreted as a direct approximation of the following perturbed Darcy equation
\begin{align}
  &\mathbf{u}=-\frac{1}{12\eta(\phi)}\big(\nabla p+\frac{\gamma}{\epsilon}\phi \nabla \mu \big), \label{PDarcy1}\\
& \nabla \cdot \mathbf{u}- k\nabla \cdot (\frac{1}{12\eta(\phi)}\nabla p_t)=0. \label{PDarcy2}
\end{align}
Eqs.\eqref{PDarcy1}--\eqref{PDarcy2} can be viewed as a penalty method/pseudo-compressibility method for the Darcy equation with variable viscosity. Formally, Eq. \eqref{PDarcy2} is a $\mathcal{O} (k)$ approximation of the divergence free condition irrespective of the value of $\eta(\phi)$. The choice of a constant $\eta_1$ in Eq. \eqref{PDarcy2} leads to the discretization Eq. \eqref{Pre}, and is crucial in proving the unconditional stability of the scheme, see Section 3 below. We remark that the pseudo-compressibility technique, also known as pressure-stabilization,  is well-known in the computation of incompressible flow with constant density, cf. \cite{Rannacher1992, Shen1996b} and references therein. Recently, it has been generalized to solving incompressible Navier-Stokes equation with variable density \cite{GuSa2009}.

\section{Fully discrete formulation}
\subsection{Finite element formulation}
We note that Eqs. \eqref{CHD} can be reformulated without using the explicit velocity.
\begin{align}
&\partial_t \phi +\nabla \cdot \big[\frac{\phi}{12\eta(\phi)}(\nabla p+\frac{\gamma}{\epsilon} \phi\nabla \mu)\big]=\frac{1}{Pe}\nabla \cdot \big(m(\phi) \nabla \mu\big), \label{RCH}\\
&\mu=\phi^3-\phi-\epsilon^2\Delta \phi, \label{RChePe}\\
&\nabla \cdot \big[\frac{1}{12\eta(\phi)}(\nabla p+\frac{\gamma}{\epsilon} \phi \nabla \mu)\big]=0. \label{RPre}
\end{align}
The corresponding boundary conditions become
\begin{align}
\nabla \phi \cdot \mathbf{n}|_{\partial \Omega}=0, \quad \nabla \mu \cdot \mathbf{n}|_{\partial \Omega}=0, \quad \nabla p \cdot \mathbf{n}|_{\partial \Omega}=0. \label{RBoun}
\end{align}

A weak formulation and solutions to the initial-boundary value problem \eqref{RCH}--\eqref{RBoun} can be defined similarly as \cite{FeWi2012}. Here we consider mainly the 2D case. $H^1(\Omega)$ is the usual Hilbert space, and $L^2_0(\Omega)$ is a subspace of $L^2(\Omega)$ whose elements have zero average.
\begin{defn}
Let $\phi_0 \in H^1(\Omega)$. A triple $\{\phi, \mu, p\}$ is called a weak solution of problem \eqref{RCH}-\eqref{RBoun} if it satisfies 
\begin{align}
& \phi \in L^\infty(0, T; H^1(\Omega)) \cap L^4(0,T; L^\infty(\Omega)),  \\
&\partial_t \phi \in L^{\frac{8}{5}}(0,T; (H^1(\Omega))^\prime), \\
&\mu \in L^2(0,T; H^1(\Omega)), \\
& \frac{1}{12\eta(\phi)}(\nabla p + \frac{\gamma}{\epsilon} \phi \nabla \mu) \in L^2(0,T; \mathbf{L}^2(\Omega)), \\
&p \in  L^{\frac{8}{5}}(0,T; H^1(\Omega)\cap L^2_0(\Omega)),
\end{align}
and there hold, for almost all $t \in (0,T)$
\begin{align}
&\langle \partial_t \phi, v\rangle+\big(\frac{\phi}{12\eta(\phi)}[\nabla p+\frac{\gamma}{\epsilon} \phi \nabla \mu], \nabla v\big)+\frac{1}{Pe}(m(\phi)\nabla \mu, \nabla v)=0, \quad \forall v \in H^1(\Omega), \label{WCH}\\
&(\mu, \varphi)-(\phi^3-\phi, \varphi)-\epsilon^2 (\nabla \phi, \nabla \varphi)=0, \quad \forall \varphi \in H^1(\Omega),  \label{WChePe} \\
&\big(\frac{1}{12\eta(\phi)}[\nabla p+\frac{\gamma}{\epsilon} \phi \nabla \mu], \nabla q \big)=0, \quad \forall q \in H^1(\Omega),  \label{WPre}
\end{align}
with initial condition $\phi(0)=\phi_0$.
\end{defn}

Under the boundedness assumption \eqref{AssBound} on $  \eta(\phi), m(\phi) $, the existence of such a weak solution can be established similarly as \cite{FeWi2012} (see also \cite{WaZh2013, HWW2014}).

Let $\mathcal{T}_h$ be a quasi-uniform triangulation of the domain $\Omega$ of mesh size $h$.  We introduce $Y_h$ the finite element approximation of  $H^1(\Omega)$  based on the triangulation $\mathcal{T}_h$. In addition, we define $M_h=Y_h \cap L^2_0(\Omega):= \{q_h \in Y_h; \int_{\Omega}q_h dx=0\}$. We assume that $Y_h \times Y_h$ is a stable pair for the biharmonic operator in the sense that there holds the inf-sup condition
\begin{align*}
\sup_{\phi_h \in Y_h} \frac{(\nabla \phi_h, \nabla \varphi_h)}{||\phi_h||_{H^1}} \geq c||\varphi_h||_{H^1}, \quad \forall \varphi_h \in Y_h.
\end{align*}

We now introduce the fully discrete finite element formulation for problem \eqref{CHD} based on the time discretization \eqref{CHO}-\eqref{Vel} and the weak formulation \eqref{WCH}-\eqref{WChePe}: find $\{\phi^{n+1}_h, \mu^{n+1}_h, p^{n+1}_h\} \in Y_h \times Y_h \times M_h$ such that
\begin{align}
&\big( \frac{\phi_h^{n+1}-\phi_h^n}{k}, v_h\big)+\big(\frac{\phi_h^n}{12\eta(\phi_h^n)}[\nabla p_h^n+\frac{\gamma}{\epsilon} \phi^n_h \nabla \mu_h^{n+1}], \nabla v_h\big) \nonumber \\
&\quad \quad +\frac{1}{Pe}(m(\phi_h^n)\nabla \mu_h^{n+1}, \nabla v_h)=0, \quad \forall v_h \in Y_h,  \label{FCH}\\
&(\mu_h^{n+1}, \varphi_h)-\big((\phi_h^{n+1})^3-\phi_h^n, \varphi_h\big)-\epsilon^2 (\nabla \phi_h^{n+1}, \nabla \varphi_h)=0, \quad \forall \varphi_h \in Y_h,  \label{FChePe} \\
&\big(\nabla (p_h^{n+1}-p_h^n), \nabla q_h\big)=-\big(\frac{\eta_1}{\eta(\phi_h^n)}[\nabla p_h^n+\frac{\gamma}{\epsilon} \phi_h^n \nabla \mu_h^{n+1}], \nabla q_h \big), \quad \forall q_h \in Y_h,  \label{FPre}
\end{align}
with initial condition $\phi_h^0=\phi_{0h}$, where $\phi_{0h}$ is the projection of $\phi_0$ in $Y_h$.

\begin{rem}
In the case of constant viscosity coefficient, i.e. $\eta(\phi)\equiv \eta_1$, Eq. \eqref{FPre} reduces to 
\begin{align*}
\big(\nabla (p_h^{n+1}+\frac{\gamma}{\epsilon} \phi_h^n \nabla \mu_h^{n+1}), \nabla q_h\big)=0, \quad \forall q_h \in Y_h, 
\end{align*} 
which is the finite element counterpart of Eq. \eqref{WPre}. The authors in \cite{Wise2010, FeWi2012, GXX2014} treat exclusively the case of constant viscosity. It is remarkable that the scheme with explicit pressure in Eq. \eqref{FCH} is still unconditionally stable.
\end{rem}

\subsection{Stability of the fully discrete scheme}
Our aim in this subsection is to show that the fully discrete scheme \eqref{FCH}--\eqref{FPre} is energy stable for all $h, k, \epsilon >0$. Without ambiguity, we denote by $(f,g)$ the $L^2$ inner product between functions $f$ and $g$. First of all, we claim that at each time step Eq. \eqref{FCH}--\eqref{FPre} are uniquely solvable.

\begin{prop}
For any mesh parameters $k, h$ and any $\epsilon >0$, there exists a unique solution $\{\phi_h^{n+1}, \mu_h^{n+1}, p_h^{n+1}\}$ to the scheme \eqref{FCH}--\eqref{FPre}.
\end{prop}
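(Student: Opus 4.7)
The plan is to split the solvability question into two independent stages. Once $(\phi_h^{n+1},\mu_h^{n+1})$ are known, equation \eqref{FPre} is a linear discrete Neumann--Poisson problem for $p_h^{n+1}\in M_h$; the bilinear form $(\nabla\cdot,\nabla\cdot)$ is coercive on $M_h$ by the Poincar\'e--Wirtinger inequality, so the Lax--Milgram theorem delivers a unique $p_h^{n+1}$. Everything therefore reduces to unique solvability of the Cahn--Hilliard subsystem \eqref{FCH}--\eqref{FChePe}.

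For that subsystem, I would recast \eqref{FCH}--\eqref{FChePe} as the Euler--Lagrange condition of a strictly convex minimization, in the spirit of the convex-splitting solvability arguments of Wise \cite{Wise2010} and Feng--Wise \cite{FeWi2012}. First, the two occurrences of $\nabla\mu_h^{n+1}$ in \eqref{FCH} combine into a single weighted term $\big(K(\phi_h^n)\nabla\mu_h^{n+1},\nabla v_h\big)$ with coefficient $K(\phi_h^n):=\frac{\gamma(\phi_h^n)^2}{12\epsilon\,\eta(\phi_h^n)}+\frac{m(\phi_h^n)}{Pe}$, which by \eqref{AssBound} is bounded above and satisfies $K(\phi_h^n)\ge m_1/Pe>0$. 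Using this positive weight, introduce the discrete weighted inverse Laplacian $G_h:Y_h\cap L^2_0(\Omega)\to Y_h\cap L^2_0(\Omega)$ defined by $\big(K(\phi_h^n)\nabla G_hf,\nabla v_h\big)=(f,v_h)$ for all $v_h\in Y_h$, well-posed by Lax--Milgram on the mean-zero subspace. Testing \eqref{FCH} with $v_h=1$ yields mass conservation $\int_\Omega\phi_h^{n+1}=\int_\Omega\phi_h^n$, so I seek $\phi_h^{n+1}$ in the affine subspace $\mathcal M:=\{\phi\in Y_h:\int_\Omega\phi=\int_\Omega\phi_h^n\}$. After absorbing the explicit pressure forcing into an auxiliary function $\chi_h\in Y_h\cap L^2_0(\Omega)$ defined by $\big(K(\phi_h^n)\nabla\chi_h,\nabla v_h\big)=-\big(\frac{\phi_h^n}{12\eta(\phi_h^n)}\nabla p_h^n,\nabla v_h\big)$, the candidate functional is
\[
J(\phi)=\frac{1}{2k}\big(\phi-\phi_h^n,\,G_h(\phi-\phi_h^n)\big)+\int_\Omega\Big(\tfrac14\phi^4-\phi_h^n\phi-\chi_h\phi\Big)\,dx+\frac{\epsilon^2}{2}\|\nabla\phi\|_{L^2}^2.
\]
The weighted $H^{-1}$-type quadratic term and the quartic make $J$ strictly convex, and coercivity on $\mathcal M$ follows from Poincar\'e--Wirtinger (the mean is fixed there) together with $L^4$ growth. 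The direct method then yields a unique minimizer $\phi_h^{n+1}\in\mathcal M$. Setting $\mu_h^{n+1}\in Y_h$ via the linear identity \eqref{FChePe} and testing \eqref{FCH} against $v_h=G_h\psi$ for mean-zero $\psi$, the defining identities of $G_h$ and $\chi_h$ show that \eqref{FCH} reduces exactly to the Euler--Lagrange condition for $J$; on constants, \eqref{FCH} holds automatically by mass conservation.

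The main obstacle I anticipate is engineering $J$ so that its critical points correspond precisely to solutions of the coupled system \eqref{FCH}--\eqref{FChePe}; in particular, the explicit pressure contribution carries a $\nabla v_h$ factor and must be folded into the convex-splitting structure through the auxiliary $\chi_h$, whose existence requires only the trivial compatibility that the right-hand side annihilates constants. A secondary technical point is the bookkeeping of the constant mode: the Euler--Lagrange identity lives on $Y_h\cap L^2_0$, but mass conservation closes the gap automatically on constants. Once $J$ is set up correctly, uniqueness of $(\phi_h^{n+1},\mu_h^{n+1})$ is an immediate consequence of strict convexity, after which the first stage produces the unique $p_h^{n+1}\in M_h$.
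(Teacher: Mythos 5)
Your proposal is correct and follows essentially the route the paper itself indicates: decouple the linear pressure equation \eqref{FPre} (Lax--Milgram on $M_h$) and establish unique solvability of \eqref{FCH}--\eqref{FChePe} by recasting it as a strictly convex minimization over the mass-constrained affine space, as in \cite{KSW2008, Wise2010, FeWi2012}. The paper omits the details for brevity, whereas you supply them --- in particular the combined positive weight $K(\phi_h^n)$, the weighted discrete inverse Laplacian $G_h$, and the auxiliary $\chi_h$ absorbing the explicit pressure forcing --- and these details check out.
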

We note that Eq. \eqref{FPre} is decoupled from the Cahn-Hilliard Eqs. \eqref{FCH}--\eqref{FChePe}. The unique solvability of \eqref{FCH}--\eqref{FChePe} can be established by reformulating the equations as a convex minimization problem \cite{KSW2008}. Another approach is to explore the monotonicity  associated with the convex splitting scheme, cf. \cite{HaWa2014}. Here we omit the details for brevity.

Now, we show that the fully discrete scheme is unconditionally stable. For that, we introduce a discrete energy functional 
\begin{align}\label{TDEnF}
E(\phi_h^n)=\gamma\int_{\Omega}\frac{1}{\epsilon}F(\phi_h^n)+\frac{\epsilon}{2}|\nabla\phi_h^n|^2\,dx.
\end{align}

\begin{thm}\label{ThmSta}
Let $\{\phi_h^{n+1}, \mu_h^{n+1}, p_h^{n+1}\}$ be the unique solution of the scheme \eqref{FCH}--\eqref{FPre}. Define $\mathbf{u}_h^{n+1}:=-\frac{1}{12\eta(\phi_h^n)}[\nabla p_h^n+\frac{\gamma}{\epsilon} \phi_h^n \nabla \mu_h^{n+1}]$. Then 
for any $k, h, \epsilon >0$, the scheme \eqref{FCH}--\eqref{FPre} satisfies a modified energy law
\begin{align}\label{FEnL}
&\big(E(\phi_h^{n+1})+\frac{k}{24\eta_1}||\nabla p_h^{n+1}||_{L^2}^2\big)-\big(E(\phi_h^{n})+\frac{k}{24\eta_1}||\nabla p_h^{n}||_{L^2}^2\big) \leq  -6k||\sqrt{\eta(\phi_h^n)}\mathbf{u}_h^{n+1}||^2_{L^2} \nonumber \\
& -\frac{k\gamma}{\epsilon Pe}||\sqrt{m(\phi_h^n)}\nabla \mu_h^{n+1}||^2_{L^2} -\frac{\gamma \epsilon}{2}||\nabla(\phi^{n+1}_h-\phi^n_h)||^2_{L^2}.
\end{align}
\end{thm}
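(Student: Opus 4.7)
The plan is to test Eqs.~\eqref{FCH}--\eqref{FPre} with carefully chosen multipliers and combine the resulting identities, mimicking the continuous energy law \eqref{continuousenergylaw}. First I would take $v_h = \frac{k\gamma}{\epsilon}\mu_h^{n+1}$ in \eqref{FCH} and $\varphi_h = -\frac{\gamma}{\epsilon}(\phi_h^{n+1} - \phi_h^n)$ in \eqref{FChePe}, then add the two resulting identities. The $\mu$-terms cancel. The nonlinear contribution $\frac{\gamma}{\epsilon}((\phi_h^{n+1})^3 - \phi_h^n, \phi_h^{n+1} - \phi_h^n)$ is bounded below by $\frac{\gamma}{\epsilon}\int_{\Omega}[F(\phi_h^{n+1}) - F(\phi_h^n)]\,dx$ via the convex-splitting inequality \eqref{F:ine}, while the gradient term $\gamma\epsilon(\nabla\phi_h^{n+1}, \nabla(\phi_h^{n+1} - \phi_h^n))$ is handled by the polarization identity $2a(a-b) = a^2 - b^2 + (a-b)^2$, which yields the telescoping $\frac{\gamma\epsilon}{2}(\|\nabla\phi_h^{n+1}\|^2_{L^2} - \|\nabla\phi_h^n\|^2_{L^2})$ together with the dissipative remainder $\frac{\gamma\epsilon}{2}\|\nabla(\phi_h^{n+1} - \phi_h^n)\|^2_{L^2}$. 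The mobility term contributes $\frac{k\gamma}{\epsilon Pe}\|\sqrt{m(\phi_h^n)}\nabla\mu_h^{n+1}\|^2_{L^2}$, giving the $E(\phi_h^{n+1}) - E(\phi_h^n)$ part of the claimed bound modulo one crossing term.

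That crossing term is $\frac{k\gamma}{\epsilon}\big(\frac{\phi_h^n}{12\eta(\phi_h^n)}[\nabla p_h^n + \frac{\gamma}{\epsilon}\phi_h^n \nabla\mu_h^{n+1}], \nabla\mu_h^{n+1}\big)$. The key observation is that from the definition of $\mathbf{u}_h^{n+1}$ one has the algebraic identity $\frac{\gamma}{\epsilon}\phi_h^n \nabla\mu_h^{n+1} = -12\eta(\phi_h^n)\mathbf{u}_h^{n+1} - \nabla p_h^n$. Substituting this back rewrites the crossing term as $12k\|\sqrt{\eta(\phi_h^n)}\mathbf{u}_h^{n+1}\|^2_{L^2} + k(\mathbf{u}_h^{n+1}, \nabla p_h^n)$, producing the intended capillary dissipation together with a residual pressure term that still needs to be converted into a telescoping pressure energy.

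To treat $k(\mathbf{u}_h^{n+1}, \nabla p_h^n)$ I would first rewrite \eqref{FPre} in the equivalent form $(\nabla(p_h^{n+1} - p_h^n), \nabla q_h) = 12\eta_1(\mathbf{u}_h^{n+1}, \nabla q_h)$ for all $q_h \in Y_h$, using once more the definition of $\mathbf{u}_h^{n+1}$. Testing with $q_h = p_h^n$ and applying the same polarization identity gives
\[
k(\mathbf{u}_h^{n+1}, \nabla p_h^n) = \frac{k}{24\eta_1}\bigl(\|\nabla p_h^{n+1}\|^2_{L^2} - \|\nabla p_h^n\|^2_{L^2} - \|\nabla(p_h^{n+1} - p_h^n)\|^2_{L^2}\bigr),
\]
which produces exactly the desired pseudo-energy $\frac{k}{24\eta_1}\|\nabla p_h^{n+1}\|^2_{L^2}$ but leaves behind the unfavorable remainder $-\frac{k}{24\eta_1}\|\nabla(p_h^{n+1} - p_h^n)\|^2_{L^2}$.

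The main obstacle is to absorb this last remainder into the capillary dissipation, and this is precisely where the choice of the constant $\eta_1$ in the stabilization \eqref{Pre} pays off. Testing \eqref{FPre} once more with $q_h = p_h^{n+1} - p_h^n$ and applying Cauchy--Schwarz yields $\|\nabla(p_h^{n+1} - p_h^n)\|_{L^2} \leq 12\eta_1\|\mathbf{u}_h^{n+1}\|_{L^2}$. Therefore
\[
\frac{k}{24\eta_1}\|\nabla(p_h^{n+1} - p_h^n)\|^2_{L^2} \leq 6k\eta_1\|\mathbf{u}_h^{n+1}\|^2_{L^2} \leq 6k\|\sqrt{\eta(\phi_h^n)}\mathbf{u}_h^{n+1}\|^2_{L^2},
\]
where the last inequality uses the lower bound $\eta_1 \leq \eta(\phi_h^n)$ from \eqref{AssBound}. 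Subtracting this bound from $12k\|\sqrt{\eta(\phi_h^n)}\mathbf{u}_h^{n+1}\|^2_{L^2}$ leaves a clean $6k\|\sqrt{\eta(\phi_h^n)}\mathbf{u}_h^{n+1}\|^2_{L^2}$, and collecting everything produces \eqref{FEnL} exactly. Had $\eta(\phi_h^n)$ been used in \eqref{Pre} instead of $\eta_1$, this last step would fail since the factor $12\eta_1$ in the $H^{-1}$ bound on $p_h^{n+1} - p_h^n$ would be replaced by $12\eta_2$, which cannot be controlled by the capillary dissipation.
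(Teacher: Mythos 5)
Your proposal is correct and follows essentially the same route as the paper's proof: test the Cahn--Hilliard pair with $k\mu_h^{n+1}$ and $-(\phi_h^{n+1}-\phi_h^n)$ (scaled by $\gamma/\epsilon$), use the convex-splitting inequality \eqref{F:ine} and the polarization identity, convert the cross term via the definition of $\mathbf{u}_h^{n+1}$ into $12k\|\sqrt{\eta(\phi_h^n)}\mathbf{u}_h^{n+1}\|_{L^2}^2 + k(\mathbf{u}_h^{n+1},\nabla p_h^n)$, telescope the pressure energy by testing \eqref{FPre} with $p_h^n$, and absorb the remainder $\frac{k}{24\eta_1}\|\nabla(p_h^{n+1}-p_h^n)\|_{L^2}^2$ using the test function $p_h^{n+1}-p_h^n$ together with $\eta_1\le\eta(\phi_h^n)$. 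The only differences from the paper are cosmetic (where the factor $\gamma/\epsilon$ is introduced and whether the identity for $\mathbf{u}_h^{n+1}$ is stated separately), so no further comment is needed.
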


\begin{proof}
Utilizing the definition 
\begin{align}\label{DVel}
\mathbf{u}_h^{n+1}:=-\frac{1}{12\eta(\phi_h^n)}[\nabla p_h^n+\frac{\gamma}{\epsilon} \phi_h^n \nabla \mu_h^{n+1}],
\end{align}
 one sees that the scheme \eqref{FCH}--\eqref{FPre} can be reformulated as
 \begin{align}
&\big( \frac{\phi_h^{n+1}-\phi_h^n}{k}, v_h\big)-\big(\phi_h^n\mathbf{u}_h^{n+1}, \nabla v_h\big) +\frac{1}{Pe}(m(\phi_h^n)\nabla \mu_h^{n+1}, \nabla v_h)=0, \quad \forall v_h \in Y_h,  \label{RFCH}\\ 
&(\mu_h^{n+1}, \varphi_h)-\big((\phi_h^{n+1})^3-\phi_h^n, \varphi_h\big)-\epsilon^2 (\nabla \phi_h^{n+1}, \nabla \varphi_h)=0, \quad \forall \varphi_h \in Y_h,  \label{DFChePe} \\
&\big(\nabla (p_h^{n+1}-p_h^n), \nabla q_h\big)=12\eta_1\big(\mathbf{u}_h^{n+1}, \nabla q_h \big), \quad \forall q_h \in Y_h. \label{DFPre}
\end{align}

Taking the test function $v_h=k \mu_h^{n+1}$ in Eq. \eqref{RFCH} gives
\begin{align}
\big(\phi_h^{n+1}-\phi_h^n, \mu_h^{n+1}\big)-k\big(\phi_h^n\mathbf{u}_h^{n+1}, \nabla \mu_h^{n+1}\big)+\frac{k}{Pe}||\sqrt{m(\phi_h^n)}\nabla \mu_h^{n+1}||^2_{L^2}=0. \label{EOCH}
\end{align}
Next, we test Eq. \eqref{DFChePe} with $\varphi_h= -(\phi_h^{n+1}-\phi_h^n)$. By utilizing  the identity $2a(a-b)=a^2-b^2+(a-b)^2$, one obtains
\begin{align}
&-\big(\mu_h^{n+1}, \phi_h^{n+1}-\phi_h^n\big)+\big((\phi_h^{n+1})^3-\phi_h^n, \phi_h^{n+1}-\phi_h^n\big) \nonumber \\
&+\frac{\epsilon^2}{2}[||\nabla \phi_h^{n+1}||_{L^2}^2-||\nabla \phi_h^{n}||_{L^2}^2+||\nabla (\phi_h^{n+1}-\phi_h^n)||_{L^2}^2]=0. \label{EChePo}
\end{align}

Adding  Eq. \eqref{EOCH} and inequality \eqref{EChePo} together, in view of the inequality \eqref{F:ine}, one has
\begin{align*}
&\big(F(\phi_h^{n+1})-F(\phi_h^n), 1\big) +\frac{\epsilon^2}{2}[||\nabla \phi_h^{n+1}||_{L^2}^2-||\nabla \phi_h^{n}||_{L^2}^2]-k\big(\phi_h^n\mathbf{u}_h^{n+1}, \nabla \mu_h^{n+1}\big) \nonumber \\ 
&\leq - \frac{k}{Pe}||\sqrt{m(\phi_h^n)}\nabla \mu_h^{n+1}||^2_{L^2}-\frac{\epsilon^2}{2}||\nabla (\phi_h^{n+1}-\phi_h^n)||_{L^2}^2. 
\end{align*}
The preceding inequality, upon multiplied by $\frac{\gamma}{\epsilon}$,   can be written as
\begin{align}
&E(\phi_h^{n+1})-E(\phi_h^n)-\frac{k\gamma}{\epsilon}\big(\phi_h^n\mathbf{u}_h^{n+1}, \nabla \mu_h^{n+1}\big) \leq -\frac{k \gamma}{\epsilon Pe}||\sqrt{m(\phi_h^n)}\nabla \mu_h^{n+1}||^2_{L^2} \nonumber \\ 
&-\frac{\gamma\epsilon}{2}||\nabla (\phi_h^{n+1}-\phi_h^n)||_{L^2}^2. \label{EOChe}
\end{align}

Now we take inner product of Eq. \eqref{DVel} with $12k\eta(\phi_h^n)\mathbf{u}_h^{n+1}$ to get
\begin{align}
12k||\sqrt{\eta(\phi_h^n)}\mathbf{u}_h^{n+1}||_{L^2}^2=-k\big(\nabla p_h^n, \mathbf{u}_h^{n+1}\big)-\frac{k\gamma}{\epsilon}\big(\phi_h^n\nabla \mu_h^{n+1}, \mathbf{u}_h^{n+1}\big). \label{EVel}
\end{align}
We proceed to take the test function $q_h=\frac{k}{12\eta_1}p_h^n$ in Eq. \eqref{DFPre}. We have
\begin{align}
\frac{k}{24\eta_1}[||\nabla p_h^{n+1}||^2_{L^2}-||\nabla p_h^{n}||^2_{L^2}]=\frac{k}{24\eta_1}||\nabla(p_h^{n+1}-p_h^n)||_{L^2}^2 + k\big(\nabla p_h^n, \mathbf{u}_h^{n+1}\big). \label{EPre}
\end{align}
To control $\frac{k}{24\eta_1}||\nabla(p_h^{n+1}-p_h^n)||_{L^2}^2$, we test Eq. \eqref{DFPre} with $q_h=p_h^{n+1}-p_h^n$ and apply the Cauchy-Schwartz inequality so that
\begin{align}
||\nabla (p_h^{n+1}-p_h^n)||_{L^2}^2 \leq 12 \eta_1||\mathbf{u}_h^{n+1}||_{L^2}||\nabla (p_h^{n+1}-p_h^n)||_{L^2}. \nonumber
\end{align}
It follows that 
\begin{align}
\frac{k}{24\eta_1}||\nabla(p_h^{n+1}-p_h^n)||_{L^2}^2 &\leq 6k \eta_1 ||\mathbf{u}_h^{n+1}||^2_{L^2} \leq 6k||\sqrt{\eta(\phi_h^n)}\mathbf{u}_h^{n+1}||_{L^2}^2, \label{Pre:ine}
\end{align}
where the last inequality follows from the assumption $\eta_1 \leq \eta(\phi)$.
Taking sum of Eq. \eqref{EVel} and Eq. \eqref{EPre}, using the inequality \eqref{Pre:ine}, one concludes that
\begin{align}
\frac{k}{24\eta_1}[||\nabla p_h^{n+1}||^2_{L^2}-||\nabla p_h^{n}||^2_{L^2}] \leq-6k||\sqrt{\eta(\phi_h^n)}\mathbf{u}_h^{n+1}||_{L^2}^2 -\frac{k\gamma}{\epsilon}\big(\phi_h^n\nabla \mu_h^{n+1}, \mathbf{u}_h^{n+1}\big). \label{EVelPre}
\end{align}

The modified energy law \eqref{FEnL} then follows from the sum of inequality \eqref{EOChe} and inequality \eqref{EVelPre}.
\end{proof}

\section{Numerical Experiments}
In this section, we perform some numerical tests to verify the accuracy and efficiency of the numerical scheme \eqref{RCH}--\eqref{RPre}. Throughout, we take $Y_h$ to be the $P1$ or P2 finite element function space. It is known \cite{Ciarlet2002} that such $Y_h \times Y_h$ pair is stable for the approximation of biharmonic operator. In principle, any inf-sup compatible approximation spaces for biharmonic operators can be used. We solve the nonlinear equations \eqref{RCH}-\eqref{RChePe} by the classical Newton's method.

\subsection{Convergence, energy dissipation and mass conservation} 
Our aim here is to show numerically that our scheme is first order accurate in time, energy-dissipative and mass-conservative. We consider the problem in a unit square $\Omega=[0,1]\times [0,1]$ with the following initial condition for $\phi$
\begin{align}\label{ConvIni}
\phi_0=0.24\cos(2\pi x)\cos(2\pi y)+0.4\cos(\pi x)\cos(3\pi y).
\end{align}
We impose homogeneous Neumann boundary condition for both $\phi$ and $\mu$, and no-flow boundary condition for velocity (hence homogeneous Neumann boundary condition for pressure).

As the Cahn-Hilliard equation does not have a natural forcing term which can be employed to manufacture exact solutions, we verify the convergence rate by Cauchy convergence test. Specifically, we discretize the domain with a uniform triangulation of spacing $h=\frac{\sqrt{2}}{2^n}$, for $n=5, 6, \cdots 9$, i.e., $2^n+1$ grid points in $x$ and $y$ directions, respectively.
The final time is $T=0.2$. We calculate the rate at which the Cauchy difference of the computed solutions at successive resolution converges to zero in the $H^1$ and $L^2$ norm, respectively. Given $\eta_1\leq \eta_2$, we take a truncated viscosity function as follows
\begin{equation}\label{TrunVisFun}
\eta(\phi)=\left\{
\begin{aligned}
& \eta_1,  \text{ for } \phi >1, \\
& \frac{1+\phi}{2} \eta_1 + \frac{1-\phi}{2} \eta_2, \text{ for } \phi \in [-1, 1], \\
& \eta_2,  \text{ for } \phi <-1. \\
\end{aligned}
\right.
\end{equation}
Thus the truncated viscosity function satisfies $\eta_1 \leq \eta \leq \eta_2$. We also choose a regularized degenerate mobility function 
\begin{align}\label{DegeMob}
m(\phi)=\sqrt{(1+\phi)^2(1-\phi)^2+\epsilon^2}.
\end{align}
The parameters are $\epsilon=0.05$, $Pe=20$, $\gamma=0.005$, $\eta_1=0.0042$, $\eta_2=0.083$.  We expect that the global error in $\phi$ and $p$ at final time $T$ is $e=\mathcal{O}(k)+\mathcal{O}(h)$ in $H^1$ norm and  $e=\mathcal{O}(k)+\mathcal{O}(h^2)$ in $L^2$ norm. Thus if we choose a linear refinement path as $k=\frac{0.2}{\sqrt{2}}h$, we expect to see first order convergence rate in time.
The results in Table \ref{ConvH1} and \ref{ConvL2} confirm this global first order convergence.
\begin{table}[h!]
 \begin{center}
 \caption{$H^1$ Cauchy convergence test.  The triangulation is uniform in space $h=\frac{\sqrt{2}}{2^n}$, for $n=5, 6, \cdots 9$. The final time is $T=0.2$, and the refinement path is linear $k=\frac{0.2}{\sqrt{2}}h$. The viscosity function and mobility function are defined in \eqref{TrunVisFun} and \eqref{DegeMob}, respectively. The other parameters are  $\epsilon=0.05$, $Pe=20$, $\gamma=0.005$, $\eta_1=0.0042$, $\eta_2=0.083$. The expected Cauchy difference at $T$ measured in $H^1$ norm is $\mathcal{O}(k) + \mathcal{O}(h)=\mathcal{O}(k)$.}
\begin{tabular}{c c c c c c c c}
 \hline 
  & $32-64$ & rate & $64-128$ & rate • &$128-256$  & rate  & $256-512$ \\ 
 \hline 
 $\phi$• & $7.88e-2$ & $1.03$ & $3.85e-2$ & $1.04$ & $1.88e-2$ & $1.04$& $9.16e-3$ \\ 

 $p$ & $7.60e-3$• &$0.68$  &$4.73e-3$  &$  1.00$  & $2.38e-3$ &$ 1.01$ & $1.18e-3$ \\ 
 \hline 
 \end{tabular}  
 
 \label{ConvH1}
  \end{center}
  \end{table}
 \begin{table}[h!]
 \begin{center}
 \caption{$L^2$ Cauchy convergence test.  The setup and parameters are the same as in Table \ref{ConvH1} The  Cauchy difference at $T$ measured in $L^2$ norm is expected to be $\mathcal{O}(k) + \mathcal{O}(h^2)=\mathcal{O}(k)$.}
\begin{tabular}{c c c c c c c c}
 \hline 
  & $32-64$ & rate & $64-128$ & rate • &$128-256$  & rate  & $256-512$ \\ 
 \hline 
 $\phi$• & $5.39e-3$ & $1.06$ & $2.58e-3$ & $1.01$ & $1.28e-3$ & $1.02$& $6.35e-4$ \\ 

 $p$ & $3.78e-4$• &$0.65$  &$2.41e-4$  &$  0.94$  & $1.26e-4$ &$ 1.01$ & $6.27e-5$ \\ 
 \hline 
 \end{tabular}  
 
 \label{ConvL2}
  \end{center}
  \end{table}
  
Next, we verify that our numerical scheme is energy-dissipative. The fully discrete counterpart of the energy functional \eqref{GLenergy} associated with the system \eqref{CHD} is defined as
\begin{align}\label{FDEnFun}
E(\phi_h^{n+1})=\gamma\int_{\Omega}\frac{1}{\epsilon}F(\phi_h^{n+1})+\frac{\epsilon}{2}|\nabla\phi_h^{n+1}|^2\,dx.
\end{align}
On the other hand, one can also define an approximate energy functional associated with the fully discrete numerical scheme \eqref{FCH}-\eqref{FPre} according to the modified energy law \eqref{FEnL}
\begin{align}
E_{app}(\phi_h^{n+1}, p_h^{n+1})=E(\phi_h^{n+1})+\frac{k}{24\eta_1}||\nabla p_h^{n+1}||_{L^2}^2.
\end{align}
One can see that formally $E_{app}(\phi_h^{n+1}, p_h^{n+1})$ is a first order approximation of $E(\phi_h^{n+1})$ for fixed $\eta_1
$. We observe from Fig. \ref{FigEnDe} that both energy functional are non-increasing at each time step where the same parameters as above are used and we have set $h=\frac{\sqrt{2}}{128}$ and $k=0.1$.

\begin{figure}[h!]
\centering
 \includegraphics[width=0.8\linewidth]{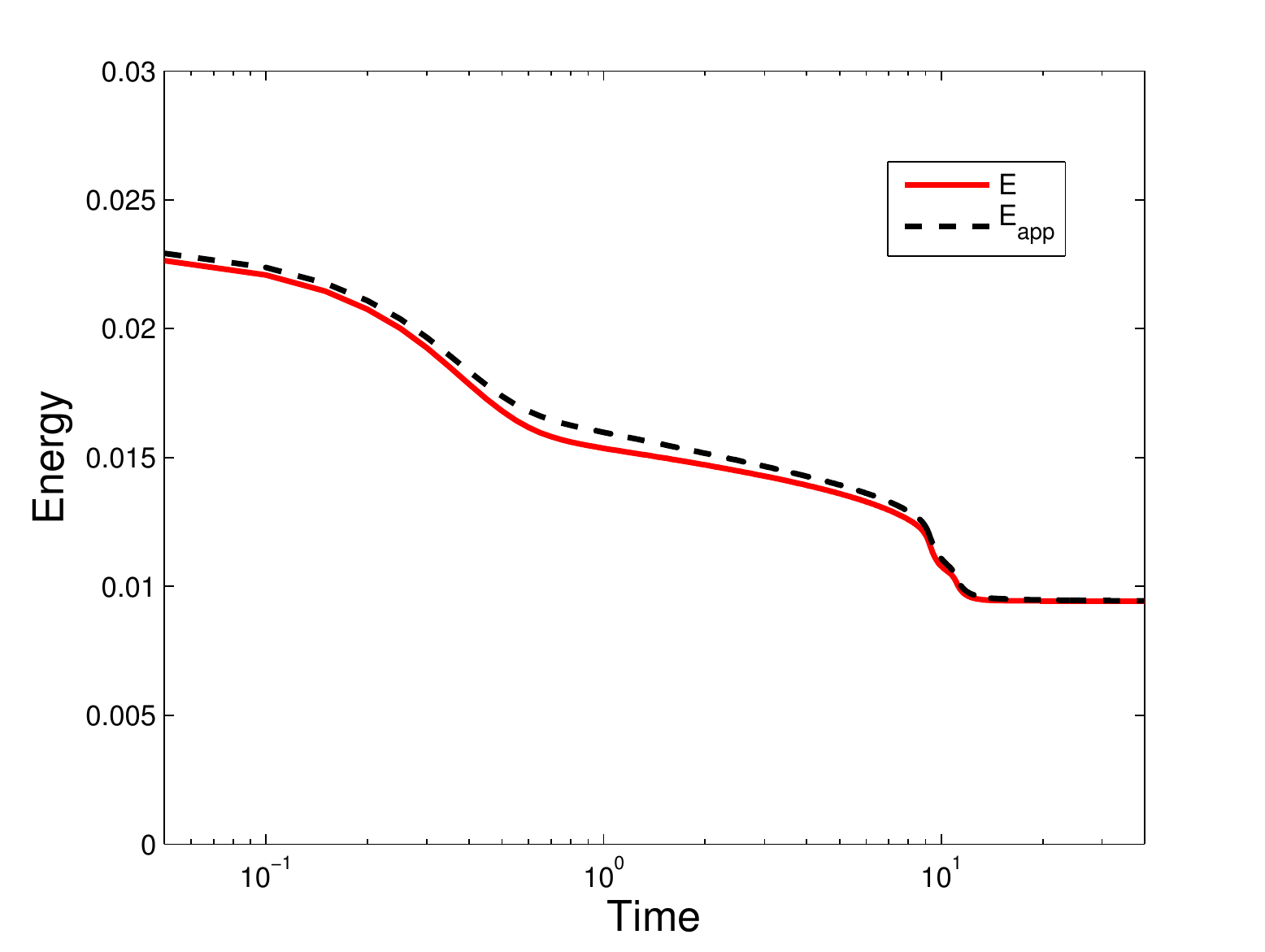}
 \caption{The discrete energy plotted as a function of time for the simulation with initial condition \eqref{ConvIni}, upper dash curve for $E_{app}(\phi_h^{n+1}, p_h^{n+1})$, lower solid curve for $E(\phi_h^{n+1})$. We have set $h=\frac{\sqrt{2}}{128}$ and $k=0.1$. The rest of the parameters are the same as Table \ref{ConvH1}. }
  \label{FigEnDe}
\end{figure}

Finally, we show that our scheme conserves mass, i.e., $\int_{\Omega} \phi_h^n =const.$ for any $n$ such that $nk \leq T$.  Note that $\int_{\Omega}\phi_0 dx=0$. After projection into the P1 finite element space in our computation, we have $\int_{\Omega} \phi_h^0dx =8.14e-6$.  Fig. \ref{Mass} shows that this exact value is preserved during the evolution, which verifies that our scheme is conservative.

\begin{figure}[h!]
\centering
 \includegraphics[width=0.8\linewidth]{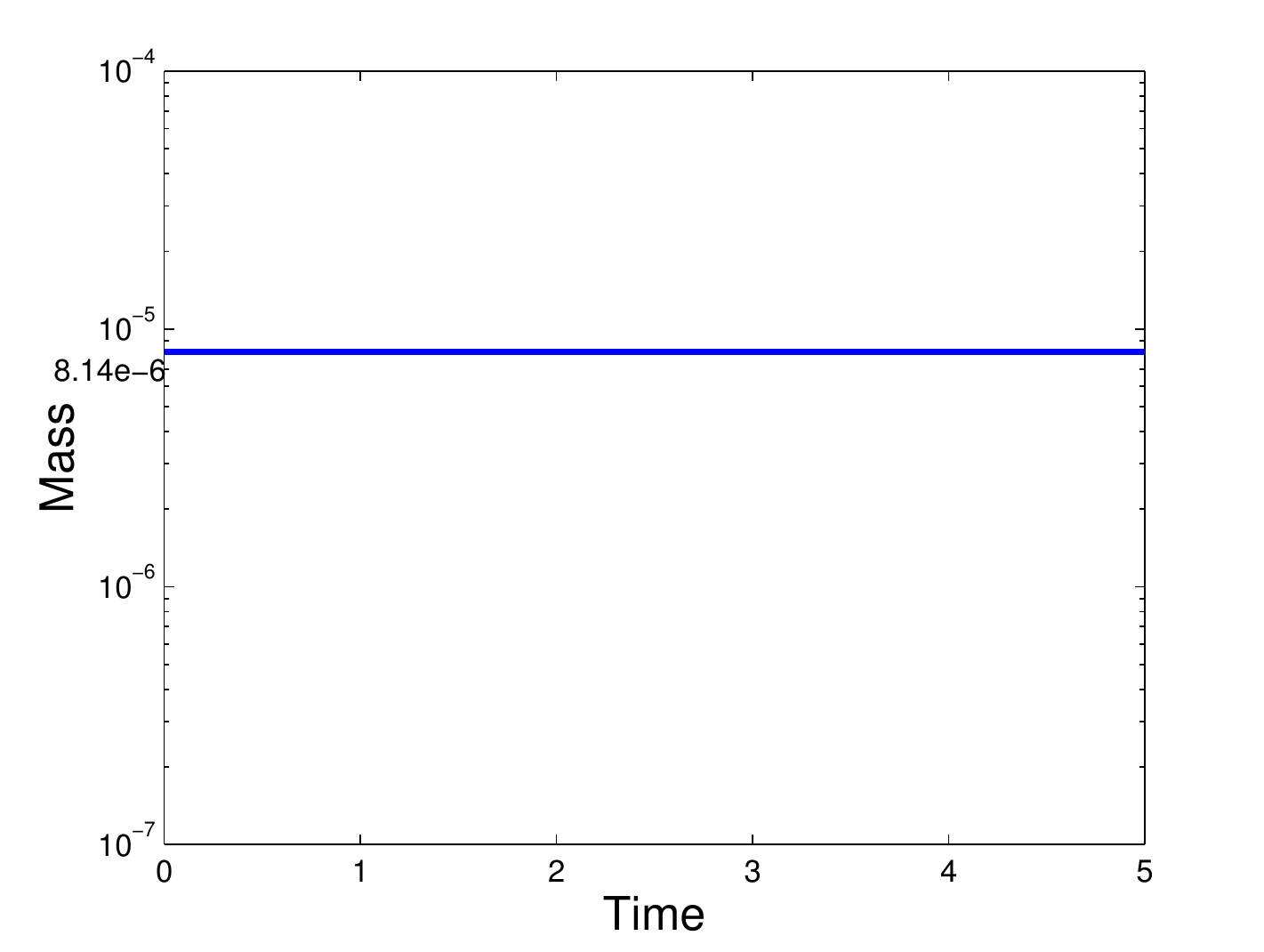}
 \caption{Time evolution of the discrete mass $\int_\Omega \phi_h^n dx$. $h=\frac{\sqrt{2}}{128}$, $k=\frac{1.0}{128}$, and the other parameters are given in Table \ref{ConvH1}.}
  \label{Mass}
\end{figure}

\subsection{Spinodal Decomposition}
To further validate our numerical scheme, we simulate the spinodal decomposition of a binary fluid in a Hele-Shaw cell and examine the effect of $\gamma$ on the coarsening process, see \cite{Wise2010, GXX2014}. Recall that $\gamma$ has the meaning of a scaled surface tension, $\tau=\frac{2\sqrt{2}}{3}\gamma$, where $\tau$ is the physical surface tension \cite{LoTr1998, LLG2002a}. There are two mechanisms responsible for the coarsening process in the Cahn-Hilliard-Hele-Shaw system \eqref{CHD}: chemical diffusion and viscous Darcy dissipation, cf. the energy dissipation inequality \eqref{continuousenergylaw}. 
When $\gamma=0$, the system \eqref{CHD} reduces to the Cahn-Hilliard equation with the surface energy
\begin{align}\label{SGLenergy}
E_{free}(\phi)=\int_{\Omega}\frac{1}{4}(1-\phi^2)^2+\frac{\epsilon^2}{2}|\nabla\phi|^2\,dx,
\end{align}
in which the coarsening process is mediated only by diffusion (no fluid flow). For $\gamma >0$, the flow is surface tension driven.  Larger $\gamma$ would improve the fluid flow, and therefore enhance the viscous Darcy dissipation. Thus the scaled surface energy $E_{free}$ would be smaller when compared at a given time. Note that it is not entirely clear from \eqref{continuousenergylaw} that the scaled surface energy would decay faster for larger $\gamma$, as the variables $\phi, \mu, \mathbf{u}$ depend on $\gamma$ in a nonlinear fashion.

\begin{figure}[h!]
\centering
\begin{tabular}{ccc}

 \includegraphics[scale=0.14]{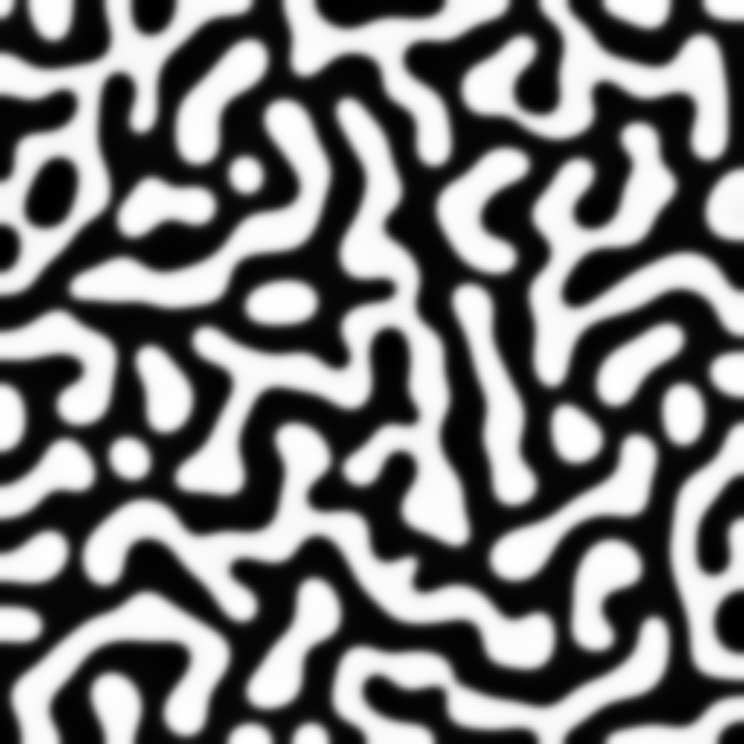}& \includegraphics[scale=0.14]{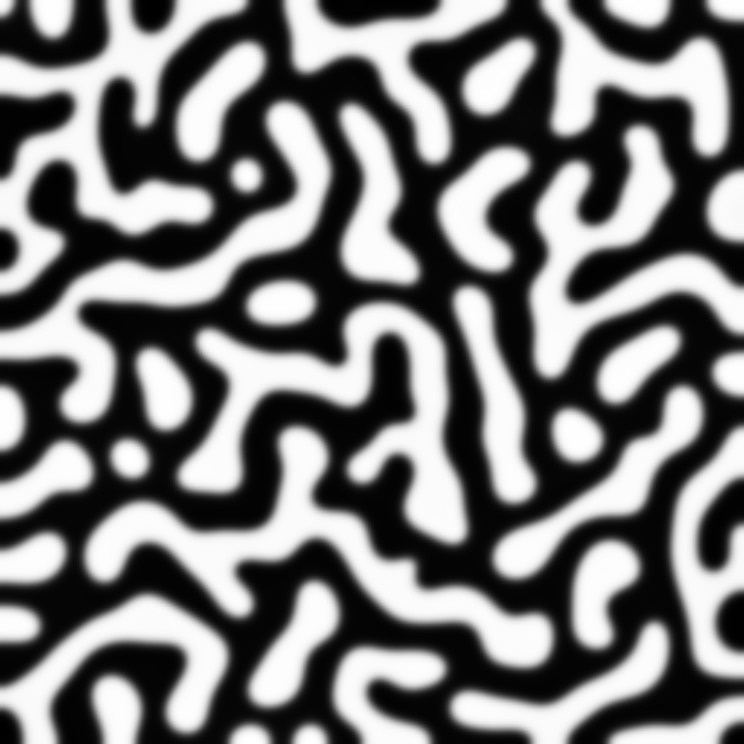} &\includegraphics[scale=0.14]{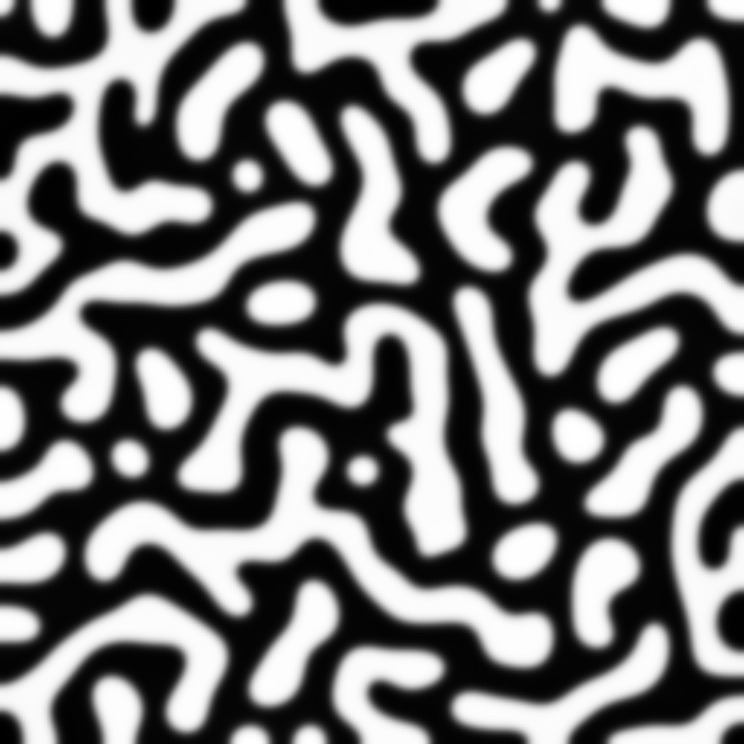} \\

\multicolumn{3}{c}{t=1} \\
 
 \includegraphics[scale=0.14]{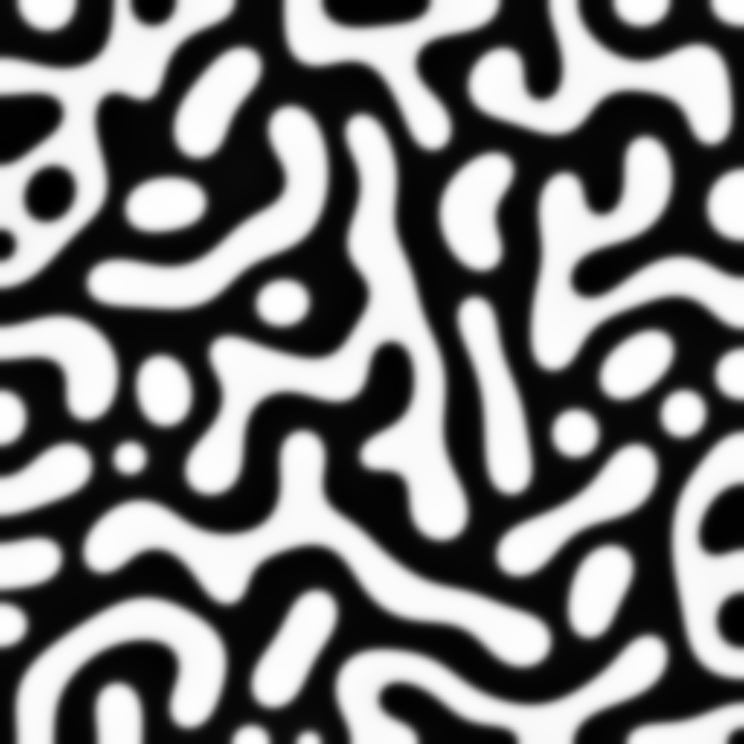}& \includegraphics[scale=0.14]{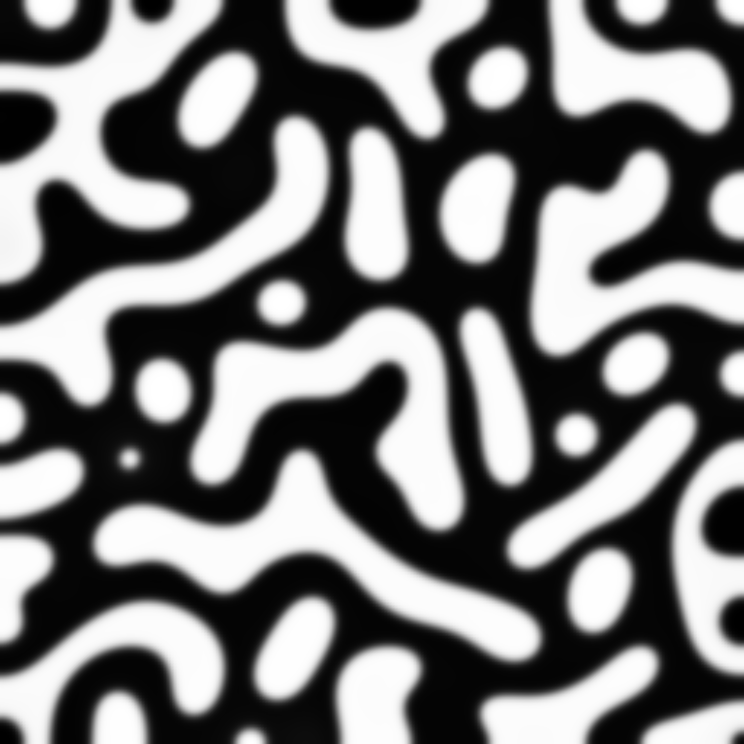} &\includegraphics[scale=0.14]{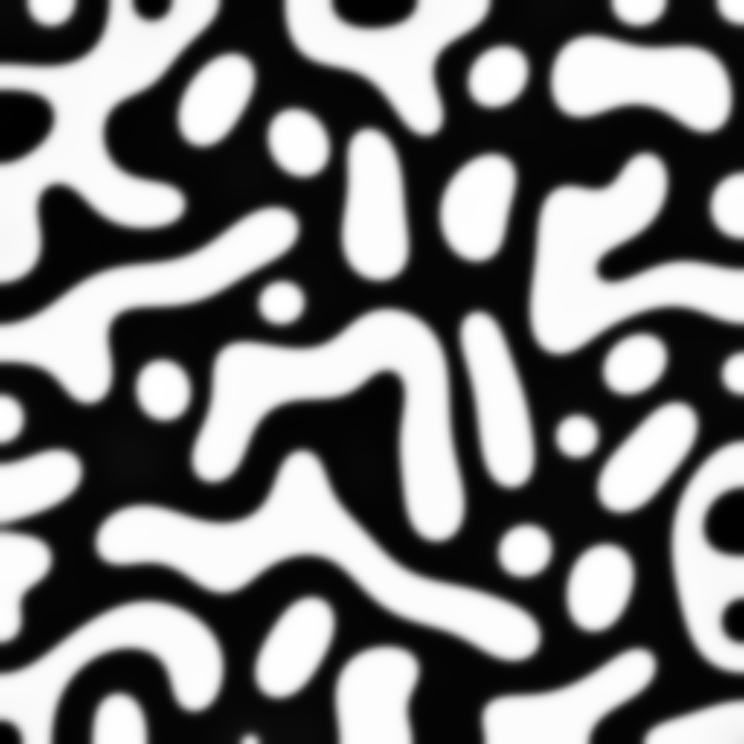}\\
 
 \multicolumn{3}{c}{t=3} \\
 
  \includegraphics[scale=0.14]{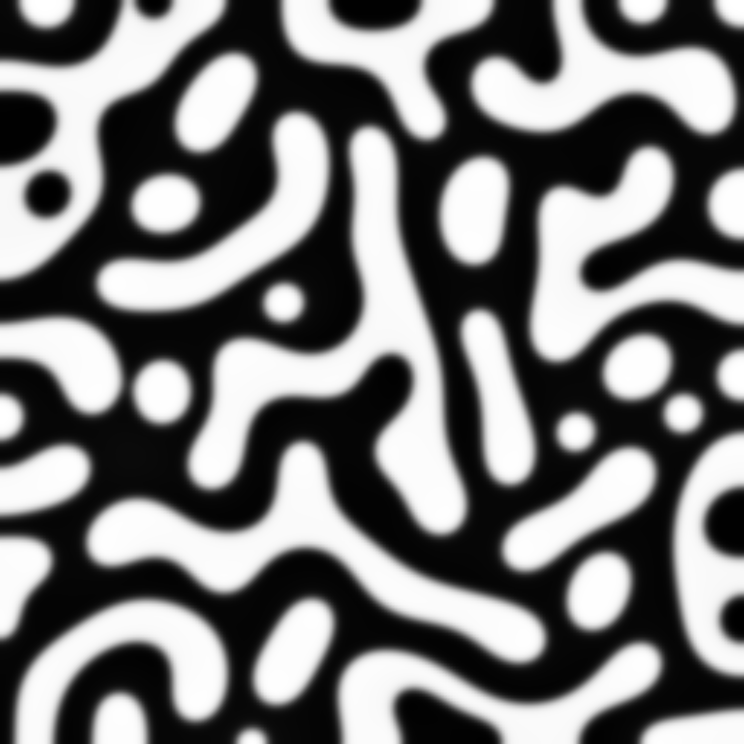}& \includegraphics[scale=0.14]{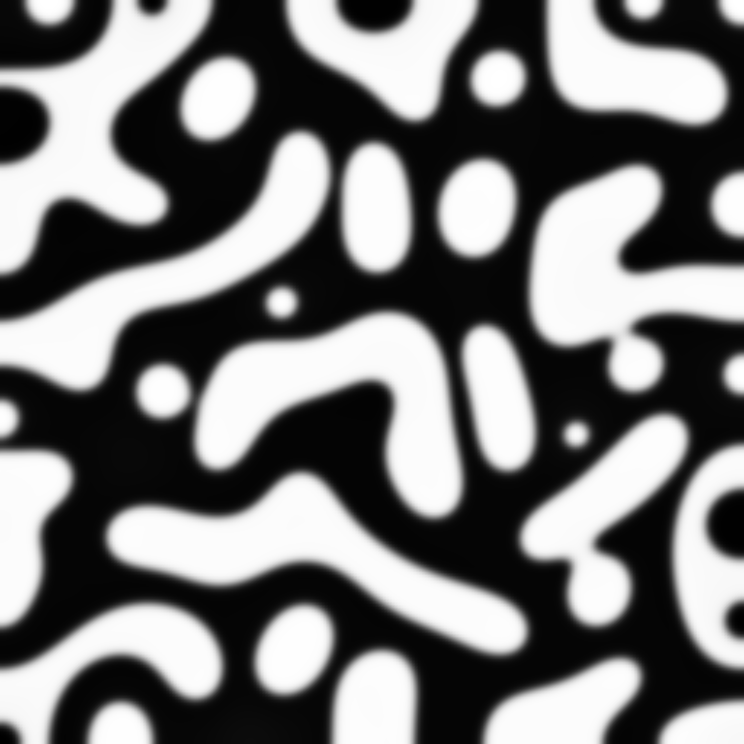} &\includegraphics[scale=0.14]{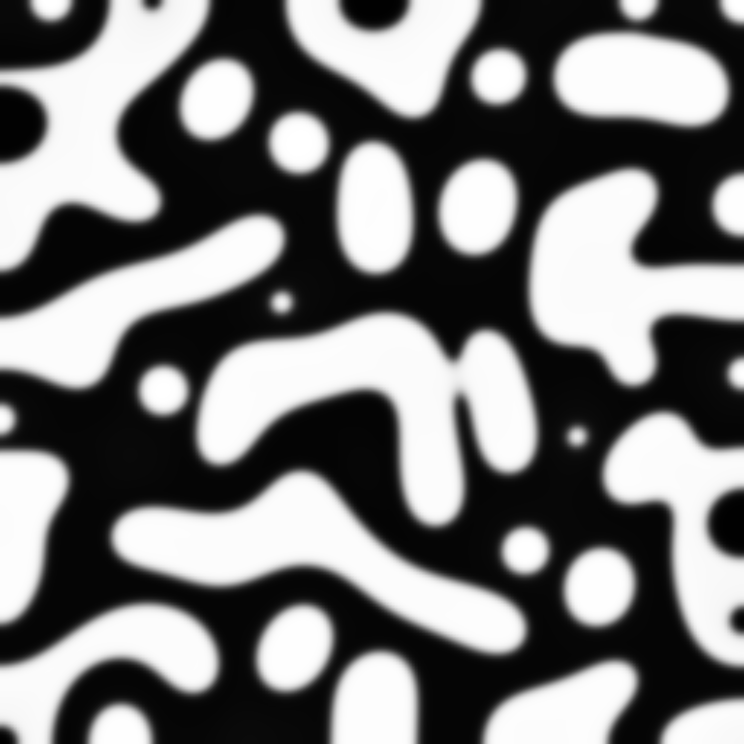}\\

\multicolumn{3}{c}{t=5} \\ 

 $\gamma=0.0$& $\gamma=0.06 $ &$\gamma=0.12$   
 
\end{tabular}
\caption{Snapshots of coarsening of a binary fluid during spinodal decomposition with $\gamma = 0.06$ (second column), $0.12$ (third column), respectively. The case of $\gamma=0$ (first column) is included for comparison purpose. White corresponds to $\phi \approx 1$, and black corresponds to $\phi \approx -1$.  The parameters are $\epsilon=0.03$, $M(\phi)=\sqrt{(1-\phi^2)^2+\epsilon^2}$, $Pe=1$, $\eta=1$ $k=0.05$, $h=\frac{6.4\sqrt{2}}{256}$.}
\label{Spi}
\end{figure}

The parameters for the computations are similar to those in \cite{Wise2010}: $\Omega=[0, 6.4]\times [0, 6.4]$, $\epsilon=0.03$, $h=\frac{6.4\sqrt{2}}{256}$, $k=0.05$, $Pe=1.0$, $\eta=0.083$ and $m(\phi)=\sqrt{(1+\phi)^2(1-\phi)^2+\epsilon^2}$. We use three values $0.0, 0.06, 0.12$ for $\gamma$. The case of $\gamma=0.0$ is included for comparison purpose. For the initial condition of the phase field variable, we take a random field of values $\phi_0=\bar{\phi}+r(x,y)$ with an average composition $\bar{\phi}=-0.05$ and random $r \in [-0.05, 0.05]$. The boundary conditions are given in \eqref{noflux1}-\eqref{noflux2}.
Fig. \ref{Spi} shows the filled contour plot of $\phi$ in gray scale for cases $\gamma=0.0, 0.06, 0.12$. Fig. \ref{EnCom} shows the evolution of the discrete scaled surface energy  \eqref{SGLenergy} in the time interval $[0, 5]$.

The results showing in Figure \ref{Spi} are comparable to those in \cite{Wise2010, GXX2014}. At early stage of spinodal decomposition $(t=1)$, the patterns in the three cases are statistically similar. Later on,  
the systems with larger $\gamma$ tend to straighten their interface faster, and the identified fluid islands are fatter, which indicate a faster coarsening rate. At $t=5$,  the patterns in the second column and third column of Fig. \ref{Spi} reveal the phenomenon of islands merging due to fluid flow, in comparison with the case $\gamma=0$ (first column) where coarsening is mainly realized through surface diffusion. The larger $\gamma$ is, the richer the islands connection is. In addition, one can observe the Ostwald ripening in all three cases: larger droplets grow at the expense of smaller ones.

Coarsening rate can be tied with the energy decay rate \cite{KoOt2002}. Fig. \ref{EnCom} further corroborates the conclusion that larger $\gamma$ leads to faster coarsening rate. The discrete scaled energy matches with each other at the early stage of spinodal decomposition. At later time, the energy is decreasing slightly faster for the cases with larger $\gamma$.

\begin{figure}[h!]
\centering
 \includegraphics[width=0.9\linewidth]{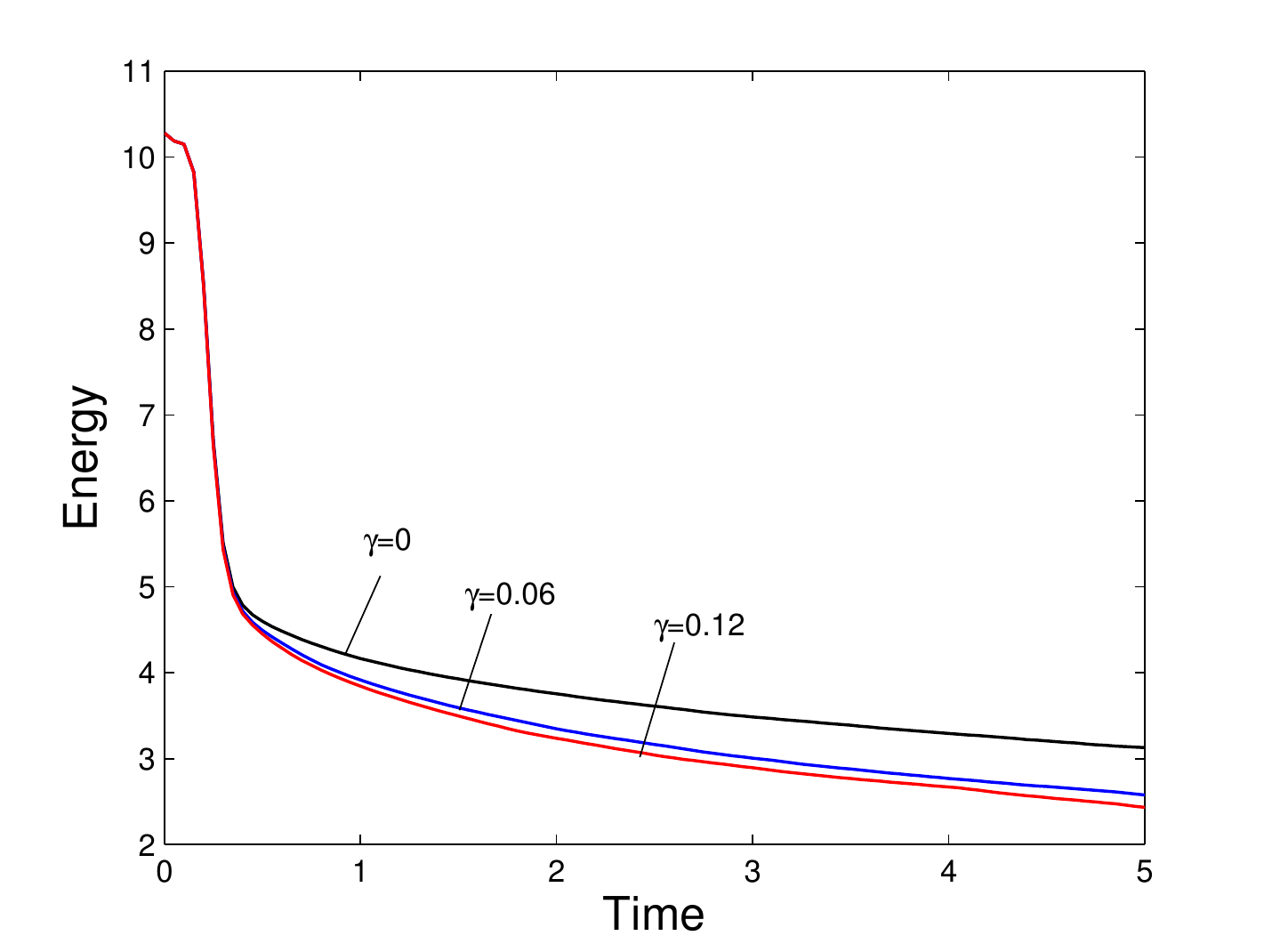}
 \caption{The discrete scaled surface  energy \eqref{SGLenergy} plotted as a function of time for the simulation of spinodal decomposition. The case of $\gamma=0$  is included for comparison purpose.  The rest of the parameters are the same as in Figure \ref{Spi}.  }
  \label{EnCom}
\end{figure}

\subsection{Interface break-up and adaptive mesh refinement}
As shown in \cite{KKL2004, KSW2008, ShYa2010b} among many others, at least 4 to 8 grid cells across the interfacial region are needed for low order methods such as P1 finite element to resolve the interface dynamics accurately. Thus adaptive mesh refinement is indispensable to achieve accuracy and efficiency for low order methods. In this set of numerical experiments, we show that our numerical scheme effected with the adaptive mesh refinement of FreeFem++ \cite{Hecht2012} is capable of capturing the  topological transition of the interface (e.g. interface break-up) smoothly.

The set-up of the experiment is similar to the Rayleigh-Taylor instability. We consider a light fluid layer initially sandwiched by two heavy fluid layers in a square domain $\Omega=[0, 2\pi] \times [0, 2\pi]$. 
For simplicity, we assume that the density variance of two fluids is small so that a Boussinesq approximation can be employed.  
Specifically, we take the background density as $1.0$ and add the following buoyancy term to the Darcy equation in \eqref{CHD} 
$$-b(\phi)\hat{\mathbf{y}}=-G(\rho(\phi)-\bar{\rho})\hat{\mathbf{y}}=-G\frac{\rho_1-\rho_2}{2}(\phi-\bar{\phi})\hat{\mathbf{y}}:= -\lambda(\phi-\bar{\phi})\hat{\mathbf{y}},$$ 
where $\hat{\mathbf{y}}$ is the unit vector pointing upwards ($\hat{\mathbf{y}}=(0,1)$), $G$ is the gravitational constant, $\rho(\phi)=\frac{1+\phi}{2}\rho_1+\frac{1-\phi}{2}\rho_2$ with $\rho_2 \approx \rho_1=1.0$, $\bar{\rho}$ is the spatially averaged density , $\bar{\phi}$ is the spatially averaged order parameter, and $\lambda=G\frac{\rho_1-\rho_2}{2}$. Introducing two flat interfaces with small perturbations
\begin{align*}
&y_1(x)=\pi -(0.5+0.1\cos(x)), \quad y_2(x)=\pi +(0.5+0.1\cos(x)),
\end{align*}
then the initial condition for the phase field variable is defined as (see also \cite{LLG2002b})
\begin{align*}
\phi_0=\tanh \big(\frac{y-y_1(x)}{\sqrt{2}\epsilon}\big)\tanh \big(\frac{y-y_2(x)}{\sqrt{2}\epsilon}\big).
\end{align*}
Fig. \ref{Ini} shows the initial configuration of the phase field variable where $\epsilon=0.01$.
\begin{figure}[h!]
\centering
 \includegraphics[width=0.8\linewidth]{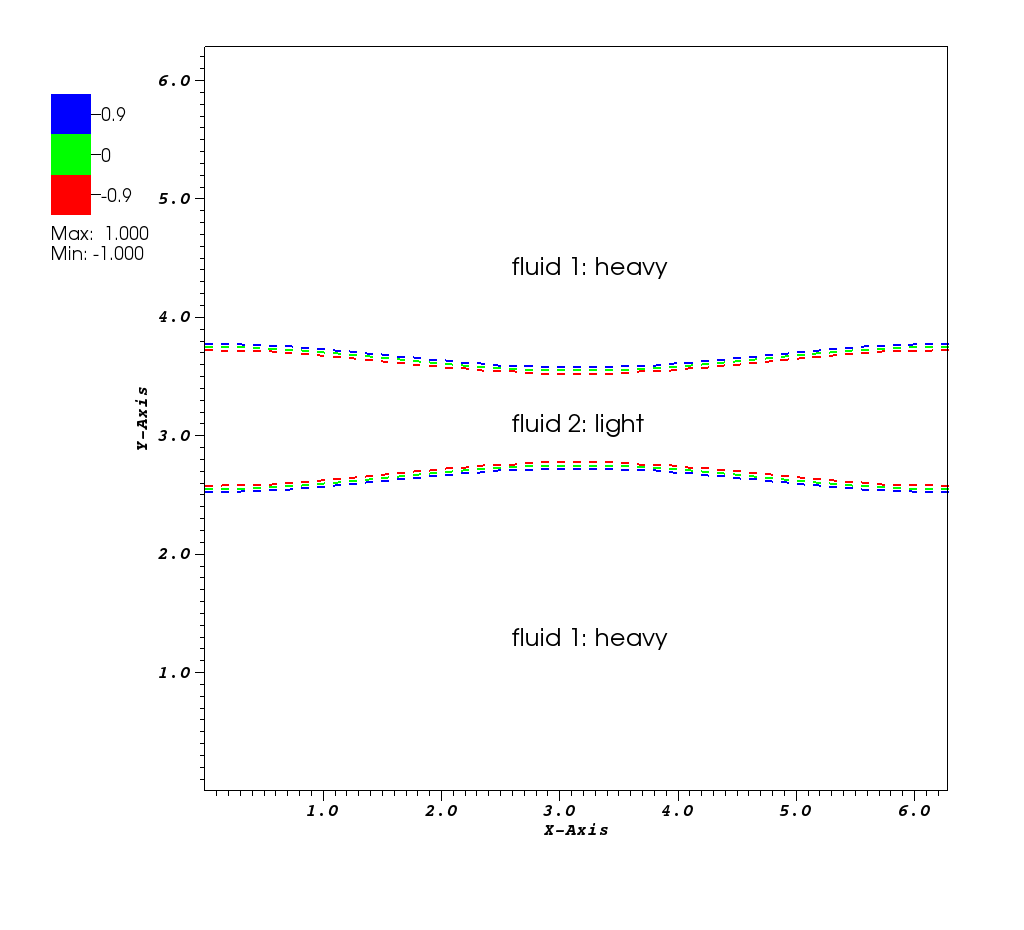}
 \caption{The initial configuration of the phase field variable. $\epsilon=0.01$. Three contours in the upper interfacial layer correspond to $\phi=0.9, 0, -0.9$ from top to bottom.   }
  \label{Ini}
\end{figure}

\begin{figure}[h!]
\centering
\begin{tabular}{cc}
 \includegraphics[scale=0.185]{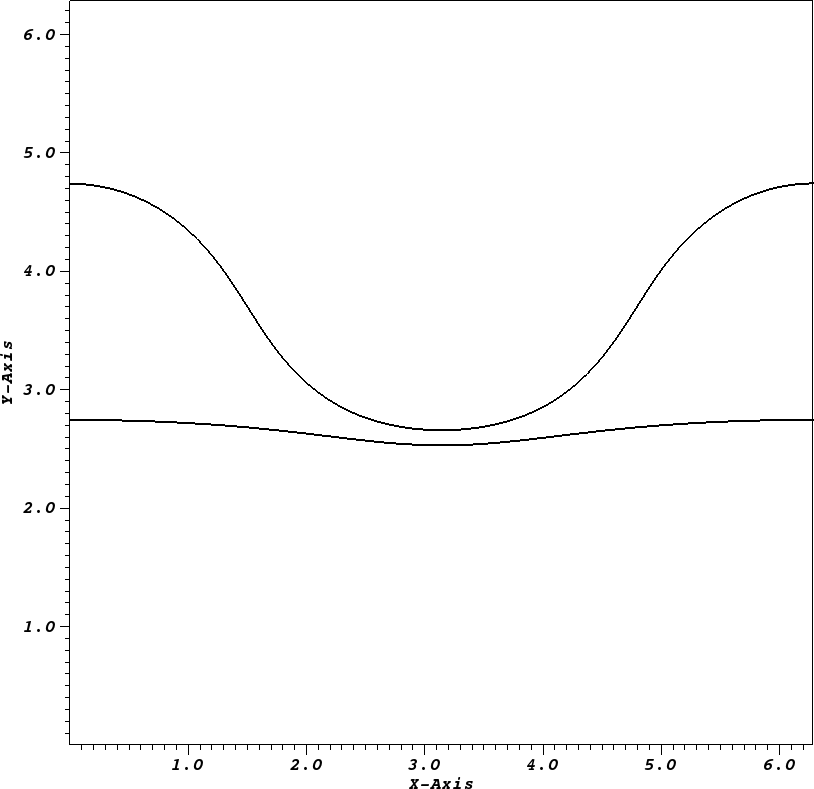} &\includegraphics[scale=0.185]{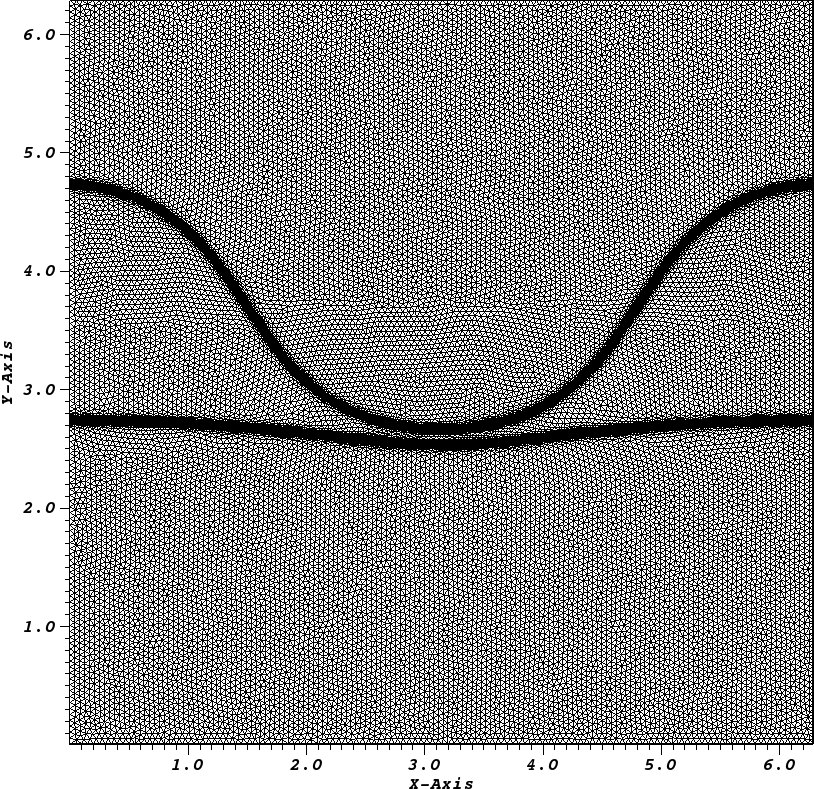} \\
 \multicolumn{2}{c}{t=8.5} \\
 
 \includegraphics[scale=0.185]{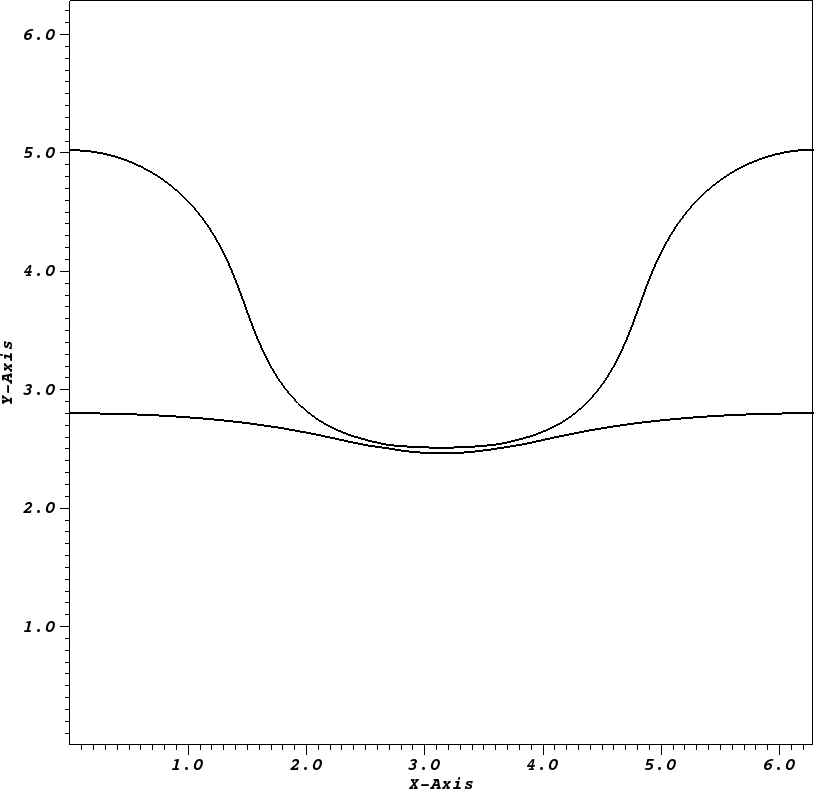} &\includegraphics[scale=0.185]{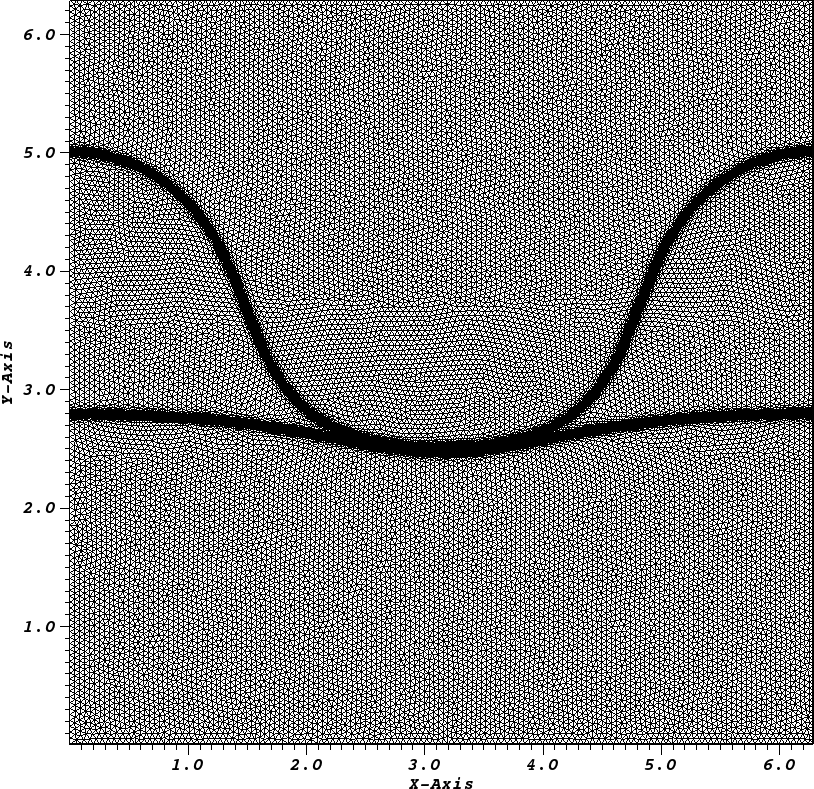}\\
 \multicolumn{2}{c}{t=9.5} \\
 
 \includegraphics[scale=0.185]{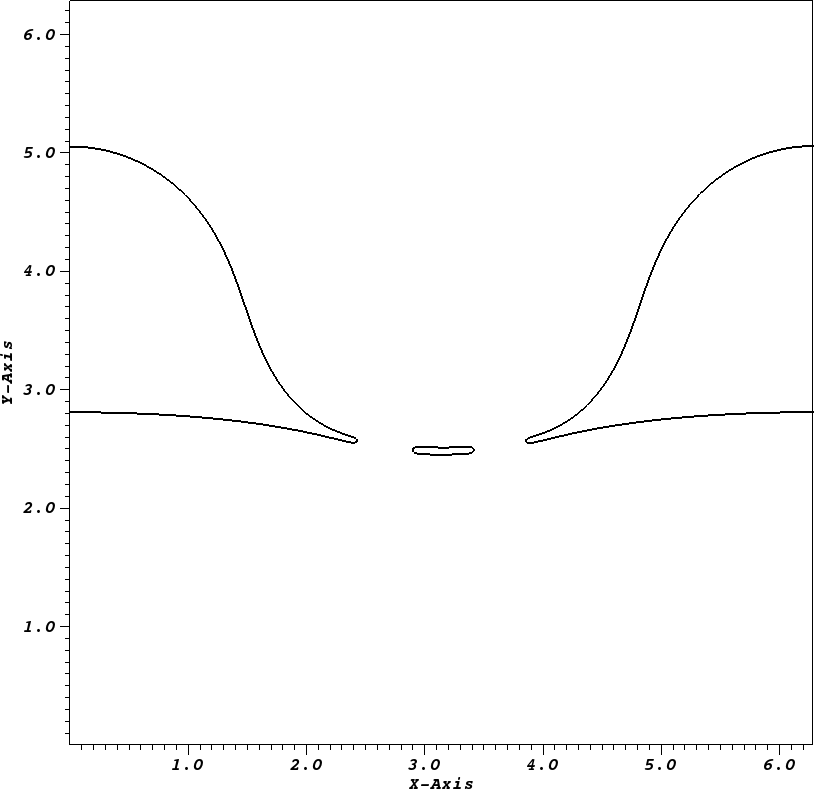} &\includegraphics[scale=0.185]{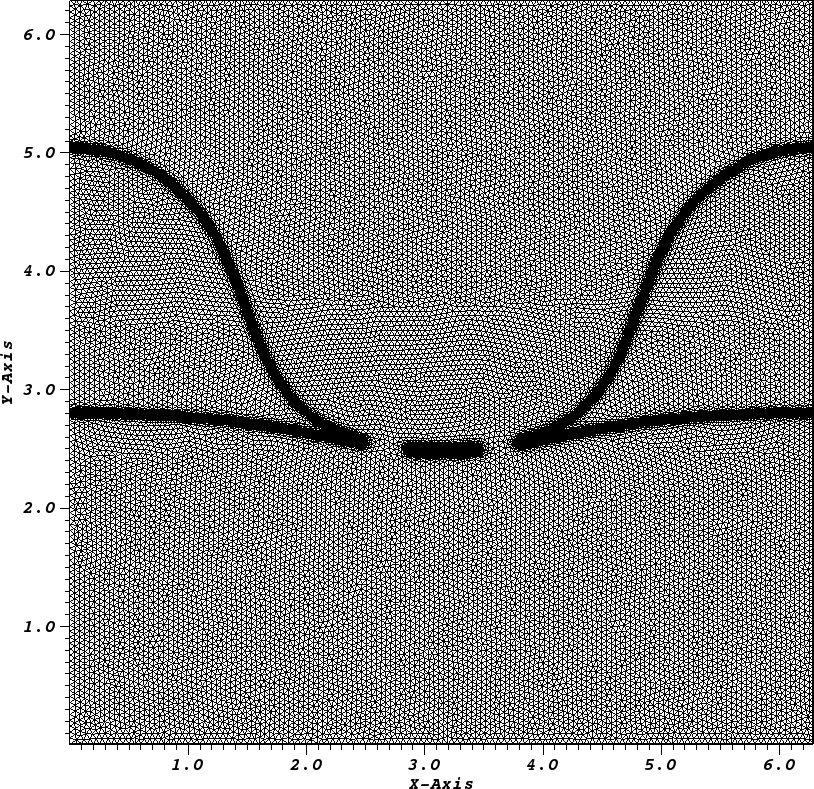}\\
 \multicolumn{2}{c}{t=9.6} \\
 
 \includegraphics[scale=0.185]{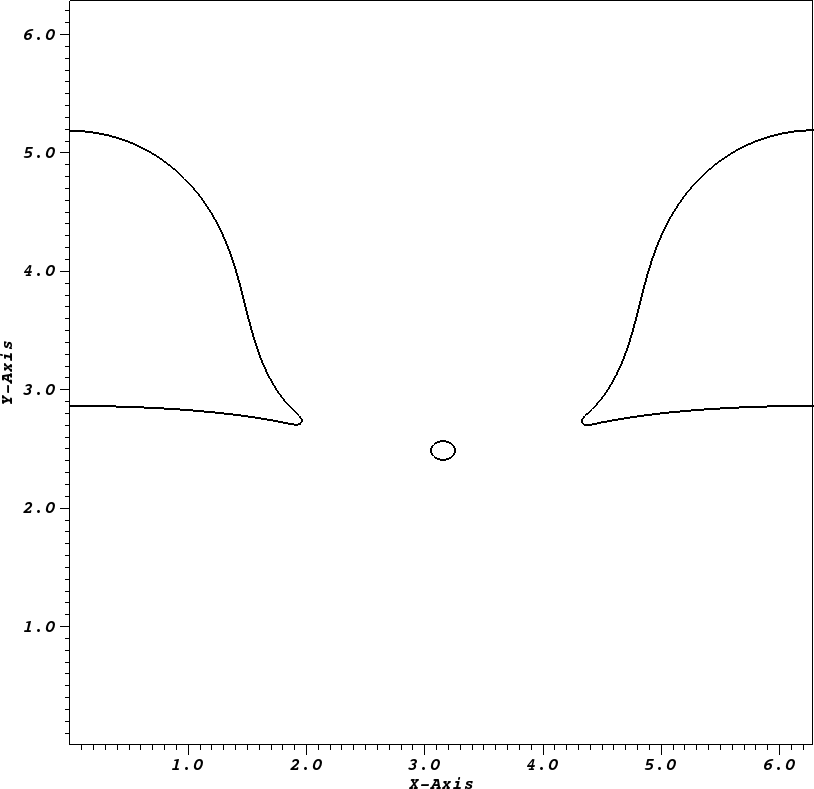} &\includegraphics[scale=0.185]{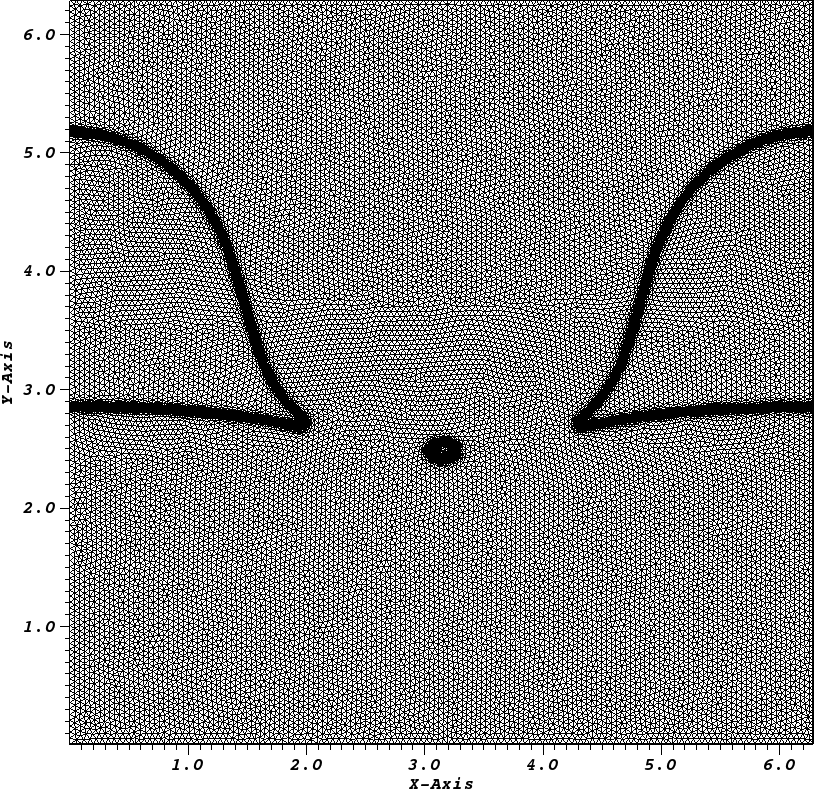}\\
 \multicolumn{2}{c}{t=10}  
 
\end{tabular}
\caption{Snapshots of the zero contour of $\phi$  (left column) and the associated meshes (right column). $\epsilon=0.01$,  $Pe=100$, $\gamma=0.25$, $m(\phi)=1.0$, $\lambda=2.946$, $k=0.005$ and $\eta(\phi)=\frac{1+\phi}{2} \eta_1+\frac{1-\phi}{2}  \eta_2$ with $\eta_1=0.1, \eta_2=0.5$.}
\label{IntBr}
\end{figure}

\newpage
In the following computation, we take $\epsilon=0.01$,  $Pe=100$, $\gamma=0.25$, $m(\phi)=1.0$, $\lambda=2.946$, and $\eta(\phi)=\frac{1+\phi}{2} \eta_1+\frac{1-\phi}{2}  \eta_2$ with $\eta_1=0.1, \eta_2=0.5$. For time stepsize, we choose $k=0.005$. In space, we use the $P_2$ finite element space for all variables, and we adapt the mesh every five time steps according to the Hessian of the order parameter such that at least 2 grid cells are located across the diffuse interface. Homogeneous Neumann boundary conditions are imposed for $\phi$ and $\mu$, and no-penetration boundary condition is prescribed on velocity. Snapshots of the zero contour of the order parameter are shown in Fig. \ref{IntBr}, along with the underlying meshes. In the language of sharp interface models, the upper interface is unstably stratified. The heavy fluid layer penetrates the light fluid layer, eventually causes the light fluid layer breaking up into three drops. The break-up event of the zero contour is captured by our numerical algorithm. The effectiveness of the empirical adaptive mesh refinement can be observed from Fig. \ref{IntBr} as well where one can not differentiate the triangles in the interfacial region due to the dense density there.

\section{Conclusions}

In this paper, we have presented a novel  time discretization scheme for the Cahn-Hilliard-Hele-Shaw system with variable viscosity and mobility that models two-phase flow in a Hele-Shaw cell or porous media. The scheme is very efficient since the update of pressure is completely decoupled from that of phase field variable, and the pressure update involves only a Poisson problem with constant coefficient.
The fully discrete numerical scheme effected with finite-element method is shown to be unconditionally stable. We verify the first order in time convergence of the fully discrete scheme by performing Cauchy convergence test. We also test our scheme on simulating spinodal decomposition of a binary fluid in a Hele-Shaw cell. Our results are comparable to those produced by coupled schemes as reported in \cite{Wise2010, GXX2014}. Finally, we show that our numerical scheme effected with adaptive mesh refinement is able to capture topological transitions of the interface smoothly. To the best of the author's knowledge, this is the first numerical scheme that decouples the pressure and the phase field variables in the numerical simulation of Cahn-Hilliard-Hele-Shaw system while maintaining unconditional stability.

There are several potential extensions of the current work. The decoupling feature in the design is especially attractive for the numerics of Cahn-Hilliard fluid models which typically require nonlinear solvers for stability's concern. The extension of the current scheme to the case of variable density, or to the case of coupled Cahn-Hilliard-Stokes-Darcy system in karstic geometry would be interesting \cite{HSW13, HWW2014}. On the theoretical side, the rigorous error analysis of the scheme, especially with variable mobility and viscosity,  is a challenging topic.  

\section*{Acknowledgments} 
This work was completed while the author was supported as a Research Assistant on an NSF grant
(DMS1312701). The author also acknowledges the support of NSF DMS1008852, a planning
grant and a multidisciplinary support grant from the Florida State University. The author thanks Dr. X. Wang and Dr. S.M. Wise for some insights into the problem and many helpful conversations.

\bibliography{multiphase}
\bibliographystyle{bmc-mathphys.bst}

\end{document}